%-----------------------------------------------------------------------
% Beginning of proc-l-template.tex
%-----------------------------------------------------------------------
%
%     This is a topmatter template file for PROC for use with AMS-LaTeX.
%
%     Templates for various common text, math and figure elements are
%     given following the \end{document} line.
%
%%%%%%%%%%%%%%%%%%%%%%%%%%%%%%%%%%%%%%%%%%%%%%%%%%%%%%%%%%%%%%%%%%%%%%%%

%     Remove any commented or uncommented macros you do not use.

\documentclass{proc-l}

%     If you need symbols beyond the basic set, uncomment this command.
%\usepackage{amssymb}

%     If your article includes graphics, uncomment this command.
%\usepackage{graphicx}

%     If the article includes commutative diagrams, ...
%\usepackage[cmtip,all]{xy}

%     Update the information and uncomment if AMS is not the copyright
%     holder.
%\copyrightinfo{2009}{American Mathematical Society}
\newcommand{\DF}[2]{{\displaystyle\frac{#1}{#2}}}
\newtheorem{theorem}{Theorem}[section]
\newtheorem{lemma}[theorem]{Lemma}

\theoremstyle{definition}
\newtheorem{definition}[theorem]{Definition}
\newtheorem{example}[theorem]{Example}

\theoremstyle{remark}

\numberwithin{equation}{section}

\begin{document}

% \title[short text for running head]{full title}
\title{Symplectic rigidity of real bidisc}

%    Only \author and \address are required; other information is
%    optional.  Remove any unused author tags.

%    author one information
%\author{Alexander Tumanov}
%\address{1409 W. Green Street, Urbana IL 61801}
%\curraddr{}
%\email{}
%\thanks{}

%    author two information
% \author[short version for running head]{name for top of paper}
\author{Yat-Sen Wong}
\address{1409 W. Green Street, Urbana IL 61801}
\curraddr{}
\email{sfxcwys@gmail.com}
%\thanks{The author would like to express his sincere gratitude to his Ph.D. advisor Prof. Alexander Tumanov for formulating the problem and for advice on solving it.}

%    \subjclass is required.
\subjclass[2010]{Primary 53D05}

\date{Nov 2, 2014}

\dedicatory{}

%    "Communicated by" -- provide editor's name; required.
\commby{}

%    Abstract is required.
\begin{abstract}
Let $\mathbb{D}$ be the unit disc in $\mathbb{C}$, then $\mathbb{D}^n(r)$ is the complex or symplectic $n$-discs of radius $r$. Let $z_j = x_j+iy_j\in\mathbb{C}, j=1,2$ and $\mathbb{D}_{\mathbb{R}}^2=\{(z_1,z_2) : |x_1|^2+|x_2|^2<1,|y_1|^2+|y_2|^2<1\}$ be the real bidisc. In this paper we will prove the following two theorems:
\begin{enumerate}
\item If $T\in O(4)$ is an orthogonal transformation on $\mathbb{R}^4$, then $T(\mathbb{D}^2)$ is symplectomorphic to $\mathbb{D}^2$ w.r.t. the standard symplectic form on $\mathbb{R}^4$ if and only if $T$ is unitary or conjugate to unitary.
\item For $r\geq 1$ and $n\geq 2$, $\mathbb{D}_{\mathbb{R}}^2\times \mathbb{D}^{n-2}(r)$ and $\mathbb{D}^2\times \mathbb{D}^{n-2}(r)$ are not symplectomorphic w.r.t. the standard symplectic form on $\mathbb{C}^n$.
\end{enumerate}
\end{abstract}

\maketitle

%    Text of article.

%    Bibliographies can be prepared with BibTeX using amsplain,
%    amsalpha, or (for "historical" overviews) natbib style.
\bibliographystyle{amsplain}
%    Insert the bibliography data here.

%%%%%%%%%%%%%%%%%%%%%%%%%%%%%%%%%%%%%%%%%%%%%%%%%%%%%%%%%%%%%%%%%%%%%%%%

%    Templates for common elements of a journal article; for additional
%    information, see the AMS-LaTeX instructions manual, instr-l.pdf,
%    included in the PROC author package, and the amsthm user's guide,
%    linked from http://www.ams.org/tex/amslatex.html .

%    Section headings
\section{Introduction}
Let $x_1,y_1,\dots, x_n,y_n$ be the standard coordinates on the $2n$-dimensional Euclidean space $\mathbb{R}^{2n}\cong\mathbb{C}^n$, the standard symplectc form on the space is given by $dx_1\wedge dy_1+\dots+dx_n\wedge dy_n$. All symplectic embeddings considered in this paper will be with respect to the standard symplectic form, unless otherwise specified. Define the standard disc in $\mathbb{C}$ of radius $R$ by $\mathbb{D}(R)=\{z\in\mathbb{C} : |z| < R\}$, also define $\mathbb{D}_{\mathbb{R}}^2(r)=\{(z_1,z_2)\in\mathbb{C}^2 : |x_1|^2+|x_2|^2<r,|y_1|^2+|y_2|^2<r\}$ be the real bidisc of radius $r$. We denote $\mathbb{D}(1)$ by $\mathbb{D}$ and $\mathbb{D}_{\mathbb{R}}^2(1)$ by $\mathbb{D}_{\mathbb{R}}^2$. We denote by $\mathbb{B}^{2n}(a)$ the $2n$-dimensional Euclidean ball of radius $a$ in $\mathbb{R}^{2n}$.

It is proved by Sukhov and Tumanov \cite{AT_Symp_Nonsqueeze} that the real bi-disc $\mathbb{D}_{\mathbb{R}}^2$ cannot be symplectically embedded into the complex cylinder $\mathbb{D}\times\mathbb{C}$. If we consider the real bidisc as obtained from a non-holomorphic change of coordinates
$$
T_0:(x_1, y_1, x_2, y_2)\mapsto (x_1, x_2, y_1, y_2)
$$
of $\mathbb{D}^2$, then the result of Sukhov and Tumanov shows that $T_0(\mathbb{D}^2)$ is not symplectomorphic to $\mathbb{D}^2$ itself. The first main result of this paper generalizes this observation: if $T\in O(4)$ is any orthogonal transformation on $\mathbb{R}^4=\mathbb{C}^2$, then $T(\mathbb{D}^2)$ is symplectomorphic to $\mathbb{D}^2$ if and only if $T$ is unitary or conjugate to unitary. We will give a more precise statement in Section~\ref{section2}.

The second result of this paper considers a high dimensional analogy of the previous result. We will show that for $r\geq 1$ and $n\geq 2$, $\mathbb{D}_{\mathbb{R}}^2\times\mathbb{D}^{n-2}(r)$ is not symplectomorphic to $\mathbb{D}^{2}\times\mathbb{D}^{n-2}(r)$.

The first striking result on symplectic rigidity was obtained by Gromov \cite{Gromov}, which states that one can symplectically embed a sphere into a cylinder only if the radius of the sphere is less than or equal to the radius of the cylinder. Following Gromov's work, many results on symplectic rigidity were obtained for various domains. For example, McDuff \cite{McDuff_ellipsoid} studied when a 4-dimensional ellipsoid can be symplectically embedded in a ball; Guth \cite{Guth} gave an asymptotic result on when a polydisc $\mathbb{D}(r_1)\times\cdots\times \mathbb{D}(r_n)$ can be symplectically embedded into another. Our results have the same spirit, but we deal with essentially different domains: real bidisc and its modifications.

There are a lot of open problems concerning symplectic rigidity, for instance it is not known that whether $\mathbb{D}_{\mathbb{R}}^2 \times \mathbb{D}(r)$ is symplectomorphic to $\mathbb{D}^2\times \mathbb{D}(r)$ when $r < 1$. The results in this paper only show that such symplectomorphism does not exist when $r \geq 1$. Another interesting open problem is to characterize when two given polydiscs are symplectomorphic.

\section{$J$-holomorphic discs and symplectic manifolds}

In this section we will recall some basic properties of $J$-holomorphic discs and symplectic manifolds.

\begin{definition}
A smooth map $\phi:(M,J)\rightarrow (M',J')$ from one almost complex manifold to another is said to be $(J, J')$-holomorphic if its derivative $d\phi$ is complex linear, that is
\begin{equation}\label{eq:CauchyRiemann1}
d\phi\circ J = J'\circ d\phi.
\end{equation}
\end{definition}

Denote by $J_{\mbox{st}}$ the standard complex structure of $\mathbb{C}^n$. A $J$-holomorphic disc or pseudo-holomorphic disc is a $(J_{\mbox{st}},J)$-holomorphic map

$$
u:\mathbb{D}\rightarrow M
$$
from $\mathbb{D}$ to an almost complex manifold $(M,J)$.

In local coordinates $z\in\mathbb{C}^n$, an almost complex structure $J$ is represented by a $\mathbb{R}$-linear operator $J(z):\mathbb{C}^n\rightarrow\mathbb{C}^n$ such that $J(z)^2 = -I$, where $I$ is the identity map. Now the Cauchy-Riemann equations (\ref{eq:CauchyRiemann1}) for a $J$-holomorphic disc $z:\mathbb{D}\rightarrow\mathbb{C}^n$ can be written in the form

$$
z_{\eta}=J(z)z_{\xi}, \zeta=\xi+i\eta\in\mathbb{D}.
$$

We represent $J$ by a complex $n\times n$ matrix function $A = A(z)$ and obtain the equivalent equations

\begin{equation} \label{eq:CauchyRiemann2}
z_{\overline{\zeta}} = A(z)\overline{z}_{\overline{\zeta}}, \zeta\in\mathbb{D}.
\end{equation}

We recall the relation between $J$ and $A$ for fixed $z$. let $J:\mathbb{C}^n\rightarrow\mathbb{C}^n$ be a $\mathbb{R}$-linear map so that det$(J_{\mbox{st}}+J)\neq 0$, where $J_{\mbox{st}}v = iv.$ Set
$$
Q = (J_{\mbox{st}}+J)^{-1}(J_{\mbox{st}}-J).
$$

\begin{lemma}
(\cite{Audin})
\label{thm:lemma1}
$J^2 = -I$ if and only if $QJ_{\mbox{st}} + J_{\mbox{st}}Q = 0$.
\end{lemma}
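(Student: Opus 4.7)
The plan is to exploit $J_{\mbox{st}}^2=-I$ (so that $J_{\mbox{st}}^{-1}=-J_{\mbox{st}}$) in order to recast $Q$ as a Cayley-type transform depending on a single matrix. Writing $E:=J_{\mbox{st}}$ and $K:=EJ$, a direct check gives $E(I-K)=E-E^2J=E+J$ and $E(I+K)=E+E^2J=E-J$, hence
\[
Q=(E+J)^{-1}(E-J)=(I-K)^{-1}(I+K).
\]
Since $I-K$ and $I+K$ are polynomials in $K$, they commute with each other and with $Q$, so one also has $Q=(I+K)(I-K)^{-1}$, and both identities $(I-K)Q=I+K$ and $Q(I-K)=I+K$ hold.

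Because $I-K=E^{-1}(E+J)$ is invertible, the condition $QE+EQ=0$ is equivalent to $(I-K)(QE+EQ)(I-K)=0$. Using the commutation identities above this becomes
\[
(I+K)E(I-K)+(I-K)E(I+K)=0.
\]
A routine expansion of the two terms gives $2E-2KEK$, so the condition reduces to $KEK=E$. A short computation then yields
\[
KEK=EJ\cdot E\cdot EJ=EJ\,E^2\,J=-EJ^2,
\]
and since $E$ is invertible, $-EJ^2=E$ is equivalent to $J^2=-I$. Chaining the equivalences proves the lemma.

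I expect the main obstacle to be conceptual rather than computational: one has to spot the Cayley substitution $K=EJ$, which turns the awkward expression for $Q$ into a rational function of a single matrix and therefore makes commutation with $Q$ trivial to exploit. Once that substitution is identified, the remainder is a few lines of algebra; a more pedestrian approach working directly with $Q=(E+J)^{-1}(E-J)$ would require substantially more bookkeeping to arrive at the same identity $KEK=E$.
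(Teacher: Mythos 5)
Your proof is correct. Note that the paper itself does not prove this lemma --- it is quoted from Audin--Lafontaine with a citation --- so there is no internal argument to compare against; your write-up therefore supplies a proof the paper omits. I checked each step: the factorizations $E(I-K)=E+J$ and $E(I+K)=E-J$ with $E=J_{\mbox{st}}$, $K=EJ$ are right (using $E^2=-I$), they give $Q=(I-K)^{-1}(I+K)$, and since $Q$ is a rational function of $K$ it commutes with $I\pm K$, so both $(I-K)Q=I+K$ and $Q(I-K)=I+K$ hold. Conjugating $QE+EQ=0$ by the invertible matrix $I-K=E^{-1}(E+J)$ correctly reduces the anticommutation condition to $(I+K)E(I-K)+(I-K)E(I+K)=2E-2KEK=0$, i.e.\ $KEK=E$, and the final computation $KEK=EJE^2J=-EJ^2$ closes the equivalence with $J^2=-I$ since $E$ is invertible. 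All steps are genuine equivalences, so both directions of the lemma are established. The more common textbook route is the direct one you mention at the end (writing $I+Q=2(E+J)^{-1}E$ and $I-Q=2(E+J)^{-1}J$ and manipulating those), which is essentially the same computation with more bookkeeping; your substitution $K=EJ$ is a clean way to organize it.
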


Notice that $QJ_{\mbox{st}} + J_{\mbox{st}}Q = 0$ is equivalent to $Q$ being a complex anti-linear operator. Therefore Lemma \ref{thm:lemma1} implies that there is a unique matrix $A\in$Mat$(n, \mathbb{C})$ such that
$$
Av = Q\overline{v}, v\in\mathbb{C}^n.
$$

%We introduce
%$$
%\mathcal{J}=\{J:\mathbb{C}^n\rightarrow\mathbb{C}^n: J\mbox{ is }\mathbb{R}\mbox{-linear}, J^2 = -I, \mbox{det}(J_{\mbox{st}}+J)\neq0\},
%$$
%$$
%\mathcal{A}=\{A\in\mbox{Mat}(n,\mathbb{D}):\mbox{det}(I-A\overline{A})\neq 0\}.
%$$

%It is proved in \cite{AandJrelation1} and \cite{AandJrelation2} that the map $J\mapsto A$ is a birational homeomorphism $\mathcal{J}\rightarrow\mathcal{A}$.

Let $M$ be a smooth manifold of real dimension $2n$. A closed non-degenerate exterior 2-form $\omega$ on $M$ is called a symplectic form on $M$. A couple $(M,\omega)$ is called a symplectic manifold. A basic example is $M = \mathbb{C}^n$ with the coordinates $z_j = x_j + iy_j, j=1,\dots, n$. The form $\omega_{\mbox{st}} = \sum^n_{j=1} dx_j\wedge dy_j = \frac{i}{2}\sum^n_{j=1}dz_j\wedge d\overline{z_j}$ is called the standard symplectic form on $\mathbb{C}^n$.

A symplectic form $\omega$ tames an almost complex structure $J$ on $M$ if $\omega(u,Ju)>0, \mbox{ for all } u\neq 0$. A basic example is $(M, \omega, J) = (\mathbb{C}^n, \omega_{\mbox{st}}, J_{\mbox{st}})$.

\begin{lemma}
(\cite{Audin})
Let $J$ be an almost complex structure on $\mathbb{C}^n$, then $J$ is tamed by $\omega_{\mbox{st}}$ if and only if the complex matrix $A$ of $J$ satisfies the condition
\begin{equation}
\label{eq:AT_direct_nonsqueeze_3}
\| A(z) \|<1, \mbox{ \emph{for all} } z\in\mathbb{C}^n.
\end{equation}
Here the matrix norm is induced by the Euclidean inner product, that is, $\|A\| = \mbox{\emph{max}}_{0\neq v\in\mathbb{R}^{2n}} |Av|_{\mathbb{R}^{2n}}/|v|_{\mathbb{R}^{2n}}$.
\end{lemma}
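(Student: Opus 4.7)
The plan is to rewrite $\omega_{\mathrm{st}}(u, Ju)$ as an explicit quadratic form in a new variable and then identify the taming condition with an operator-norm bound on $A$. The argument rests on two elementary facts: on $\mathbb{R}^{2n}$ one has $\omega_{\mathrm{st}}(u, v) = \langle J_{\mathrm{st}} u, v \rangle_{\mathbb{R}}$ for the Euclidean inner product $\langle \cdot, \cdot \rangle_{\mathbb{R}}$, and $J_{\mathrm{st}}$ is orthogonal with respect to this inner product, being a rotation by $\pi/2$ in each coordinate plane.

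From the defining relation $Q = (J_{\mathrm{st}}+J)^{-1}(J_{\mathrm{st}}-J)$ I would first derive the algebraic identity $J(I+Q) = J_{\mathrm{st}}(I-Q)$. Because $\det(J_{\mathrm{st}}+J) \neq 0$, the operator $I+Q = (J_{\mathrm{st}}+J)^{-1}(2J_{\mathrm{st}})$ is invertible, so every $u \in \mathbb{R}^{2n}$ admits a unique expression $u = (I+Q)w$, and then $Ju = J_{\mathrm{st}}(I-Q)w$. Substituting and using the orthogonality of $J_{\mathrm{st}}$,
\[
\omega_{\mathrm{st}}(u, Ju) = \langle J_{\mathrm{st}}(I+Q)w,\, J_{\mathrm{st}}(I-Q)w \rangle_{\mathbb{R}} = \langle (I+Q)w,\, (I-Q)w \rangle_{\mathbb{R}}.
\]
Expanding the right-hand side and cancelling the two cross terms by symmetry of the real inner product yields $\omega_{\mathrm{st}}(u, Ju) = \|w\|^2 - \|Qw\|^2$.

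Since $Qw = A\bar w$ and complex conjugation is a Euclidean isometry of $\mathbb{R}^{2n}$, this becomes $\omega_{\mathrm{st}}(u, Ju) = \|\bar w\|^2 - \|A\bar w\|^2$. As $w$ ranges over $\mathbb{C}^n \setminus \{0\}$ so does $\bar w$, so positivity of $\omega_{\mathrm{st}}(u, Ju)$ for every nonzero $u$ is equivalent to $\|Av\| < \|v\|$ for every nonzero $v$. In finite dimension the operator norm is attained on the unit sphere, so this is the same as $\|A\| < 1$; applying the criterion pointwise in $z$ gives the claim.

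There is no serious obstacle here. The heart of the argument is the identity $J(I+Q) = J_{\mathrm{st}}(I-Q)$, which after the linear change of variables $u = (I+Q)w$ reduces the quadratic form $\omega_{\mathrm{st}}(u, Ju)$ to the clean difference $\|w\|^2 - \|Qw\|^2$. The only point requiring mild attention is the passage from a pointwise strict inequality on vectors to the operator-norm strict inequality, which uses compactness of the unit sphere in finite dimension.
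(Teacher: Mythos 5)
Your argument is correct and complete: the identity $J(I+Q)=J_{\mathrm{st}}(I-Q)$, the substitution $u=(I+Q)w$, and the reduction of $\omega_{\mathrm{st}}(u,Ju)$ to $\|w\|^2-\|Qw\|^2=\|\bar w\|^2-\|A\bar w\|^2$ are exactly the standard Cayley-transform computation behind this lemma. The paper itself gives no proof, quoting the result from the Audin--Lafontaine volume, so there is nothing to compare against; your write-up supplies the standard argument correctly, including the two points that actually need checking (invertibility of $I+Q$ and the compactness step turning the pointwise strict inequality into $\|A\|<1$).
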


For a map $u:\mathbb{D}\rightarrow\mathbb{C}^n$, the (symplectic) area of $u$ is given by

\begin{equation}
\label{eq:sympArea}
\mbox{Area}(u) = \int_{\mathbb{D}}u^*\omega_{\mbox{st}}.
\end{equation}

If $J$ is $\omega_{\mbox{st}}$ tamed, we can consider the canonical Riemannian metric $g_{J}(X, Y)=\DF{1}{2}(\omega_{\mbox{st}}(X, JY) + \omega_{\mbox{st}}(Y, JX))$ determined by $J$ and $\omega_{\mbox{st}}$. Suppose $u$ is a $J$-holomorphic disc, then the symplectic area of $u$ coincides with the area induced by $g_{J}$; in particular, it coincides with the Euclidean area if $J=J_{\mbox{st}}$ (see \cite{Audin} for more details).

%In the case where $u$ is a $J$-holomorphic disc and $J$ is $\omega_{\mbox{st}}$ tamed, consider the canonical Riemannian metric $g(X, Y)=\DF{1}{2}(\omega_{\mbox{st}}(X, JY) + \omega_{\mbox{st}}(Y, JX))$ determined by $J$ and $\omega_{\mbox{st}}$. For $s+it\in\mathbb{D}$, we have $\partial_tu=J\partial_su$. Therefore $\partial_su, \partial_tu$ are $g$ orthogonal vectors with equal length. Thus the geometric area of the parallelogram spanned by these two vectors is simply
%$$
%|\partial_su|_g\cdot |\partial_tu|_g = |\partial_su|_g^2 = \omega_{\mbox{st}}(\partial_su, J\partial_su) = \omega_{\mbox{st}}(\partial_su, \partial_t u),
%$$
%hence the symplectic area of $Z$ coincides with the area induced by the Riemannian metric canonically defined by $J$ and $\omega_{\mbox{st}}$; in particular, it coincides with the Euclidean area if $J=J_{\mbox{st}}$ (see \cite{Audin} for more details). We also use (\ref{eq:sympArea}) for the Euclidean area of complex analytic sets in $\mathbb{C}^n$.

\section{Orthogonal transformation of complex bidisc}
\label{section2}
Let $T\in O(4)$ be an orthogonal transformation on $\mathbb{R}^4\cong\mathbb{C}^2$, let $\mathbb{D}^2 = \{ |z_j| < 1 \colon j = 1,2\}$ be the complex bidisc. In this section we will give a necessary and sufficient condition for $T(\mathbb{D}^2)$ to be symplectomorphic to $\mathbb{D}^2$ with respect to the standard symplectic form on $\mathbb{C}^2$.

First of all, we define the notion of holomorphic radius and state a theorem proved by A. Sukhov and A. Tumanov \cite{AT_Symp_Nonsqueeze} which provides a necessary condition on holomorphic radius for the existence of symplectic embedding.

\begin{definition}
Let $\Omega$ be a complex manifold. A closed set $A\subset\Omega$ is called a (complex) analytic set if it is, in a neighborhood of each of its points, the set of common zeros of a certain finite family of holomorphic functions. In this paper we only consider closed analytic sets.
\end{definition}

\begin{definition}
A point $p$ of an analytic set $A$ in a complex manifold $\Omega$ is called regular if there is a neighborhood $U$ in $\Omega$ containing $p$ such that $A\cap U$ is a complex submanifold of $U$. The complex dimension of this submanifold is said to be the dimension of $A$ at its regular point $p$, and is denoted by dim$_pA$. The set of all regular points of $A$ is denoted by reg$A$.
\end{definition}

It is a fundamental result of complex analytic sets that the set of all regular points of an analytic set $A$ is dense in $A$ (see, for example, \cite{Bishop}).

\begin{definition}
%Let $A$ be an analytic set in a complex manifold $\Omega$. The dimension of $A$ at an arbitrary point $p\in A$ is the number
%$$
%\mbox{dim}_pA := \overline{\lim_{z\rightarrow p}} \mbox{ dim}_zA \mbox{ for }z\in\mbox{reg}A.
%$$
%Then we define the dimension of $A$ to be the maximum of its dimensions at points:
%$$
%\mbox{dim}A := \max_{z\in A} \mbox{dim}_zA = \max_{\substack{z\in\mbox{reg}A}}\mbox{dim}_zA.
%$$
A purely $m$-dimensional analytic set $A$ is an analytic set such that for every $p\in\mbox{reg}A$, we have $\dim_pA=m$.
\end{definition}
%A purely $m$-dimensional analytic set is an analytic set of dimension $m$ such that its dimension at all points coincide.

\begin{definition}
Let $G$ be a domain in $\mathbb{C}^n$ containing the origin. Denote by $\mathcal{O}^1_0(G)$ the set of closed complex purely one-dimensional analytic sets in $G$ passing through the origin. Denote by $E(X)$ the Euclidean area of $X\in \mathcal{O}^1_0(G)$. The holomorphic radius rh$(G)$ of $G$ is defined as
$$
\mbox{rh}(G) = \inf\{\lambda >0 : \exists X\in \mathcal{O}^1_0(G), E(X) = \pi \lambda^2\}.
$$
\end{definition}

\begin{example}
\label{eg:Lelong}
Let $\mathbb{B}^4(r)$ be the Euclidean ball of $\mathbb{C}^2$ with radius $r$, then $\mbox{rh}(\mathbb{B}^4(r)) = r$. In fact the area $E(X)$ of $X\in\mathcal{O}^1_0(\mathbb{B}^4(r))$ is bounded from below by the area $\pi r^2$ of a section of the ball by a complex line through the origin (Lelong, 1950; see \cite{Bishop}).
\end{example}

The following theorem is known as Bishop's convergence theorem (see, for example, \cite{Bishop}), it will be used in the rest of the paper:

\begin{theorem}
\label{thm:bishop}
Let $\{A_j\}$ be a sequence of purely $p$-dimensional analytic subsets in a complex manifold $\Omega$ with locally uniformly bounded volumes:
$$
\mbox{\emph{Vol}}_{2p}(A_j\cap K)\leq M_K<\infty
$$
for any compact set $K\subset \Omega$. Here $M_K$ is a constant depending only on $K$. Then we can extract a subsequence from $\{A_j\}$ converging on compact subsets in $\Omega$ (in Hausdorff sense) to a purely $p$-dimensional analytic subset or to the empty set.
\end{theorem}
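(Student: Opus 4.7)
The plan is a two-stage argument: first produce a Hausdorff limit by compactness, then upgrade it to an analytic subset via a local branched-covering/Weierstrass representation. For the first stage, exhaust $\Omega$ by compact sets $K_1 \subset K_2 \subset \cdots$. On each $K_m$, the closed subsets form a compact space in the Hausdorff topology, so by a diagonal argument I would extract a subsequence $\{A_{j_k}\}$ such that $A_{j_k} \cap K_m$ converges in Hausdorff sense for every $m$. Let $A \subseteq \Omega$ denote the resulting limit closed set; the uniform volume bound passes to $A$ by lower semicontinuity of the $2p$-dimensional Hausdorff measure.

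To show $A$ is analytic near a point $p \in A$, I would choose coordinates $(z', z'') \in \mathbb{C}^p \times \mathbb{C}^{n-p}$ centered at $p$ and a polydisc $\Delta' \times \Delta''$ so small and so generically oriented that $A \cap (\overline{\Delta'} \times \partial \Delta'') = \emptyset$ (possible because the $2p$-volume of $A$ is locally finite). Hausdorff convergence then forces $A_{j_k} \cap (\overline{\Delta'} \times \partial \Delta'') = \emptyset$ for all large $k$, so the natural projection from $A_{j_k} \cap (\Delta' \times \Delta'')$ to $\Delta'$ is proper. By Wirtinger's inequality, the uniform volume bound forces the number of sheets to be uniformly bounded; after thinning the subsequence, assume the sheet count equals a constant $d$. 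For each $k$, the elementary symmetric functions of the $z''$-coordinates of the $d$ sheets produce a monic polynomial $P_k(z', z'')$ of degree $d$ in $z''$ whose coefficients are bounded holomorphic functions on $\Delta'$, with holomorphicity across the branch locus supplied by a Riemann-type removability theorem.

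Now I would apply Montel's theorem: the coefficient functions of $P_k$ are uniformly bounded on $\Delta'$, so a further subsequence converges normally to a monic polynomial $P(z', z'')$ with holomorphic coefficients on $\Delta'$. Because the zeros of a monic polynomial depend continuously on the coefficients in the Hausdorff metric, the Hausdorff limit of $\{P_k = 0\}$ coincides with $\{P = 0\}$; since the left side is $A \cap (\Delta' \times \Delta'')$, this exhibits $A$ locally as a purely $p$-dimensional analytic subset. If at some point the local sheet count degenerates to zero in the limit, one obtains the empty limit allowed by the statement.

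The main obstacle, in my view, is the simultaneous control of the branch loci of the $A_{j_k}$. Producing a good proper projection for a single analytic set is classical, but one needs the projection to remain proper for the whole tail of the sequence, which is where the volume hypothesis feeds back crucially: the same bound $M_K$ that governs sheet number via Wirtinger also constrains the boundary behavior, preventing sheets of $A_{j_k}$ from escaping through $\partial \Delta''$. A related subtlety is ruling out a scenario in which several sheets of $A_{j_k}$ coalesce so that $P$ has a spurious factor; this is harmless for the set-theoretic Hausdorff limit but means the multiplicity structure is lost, so one must be content with a purely dimensional statement and not an equality of currents.
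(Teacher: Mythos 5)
The paper does not actually prove this statement: it is quoted as Bishop's convergence theorem and attributed to the literature (Chirka's \emph{Complex analytic sets}), so there is no internal argument to compare against. Your sketch is essentially the standard proof from that source: Hausdorff compactness plus a diagonal argument over a compact exhaustion, then local proper projections with sheet number controlled by the volume bound, canonical defining polynomials with bounded coefficients, Montel's theorem, and continuity of roots. The overall architecture is the right one.

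One step is wrong as literally stated, and it is not the one you singled out as the main obstacle. The $2p$-dimensional Hausdorff measure is \emph{not} lower semicontinuous under Hausdorff convergence of closed sets: finite $\varepsilon$-nets of a square converge in the Hausdorff metric to the square, yet have zero area, so ``the uniform volume bound passes to $A$ by lower semicontinuity'' fails for arbitrary closed sets. What rescues the argument is that the $A_j$ are analytic: Lelong's monotonicity gives the density lower bound $\mathrm{Vol}_{2p}\bigl(A_j\cap B(x,r)\bigr)\geq c_p\, r^{2p}$ for every $x\in A_j$, so the hypothesis $\mathrm{Vol}_{2p}(A_j\cap K)\leq M_K$ caps, uniformly in $j$, the number of $r$-separated points of $A_j$ in $K$; this is what yields $\mathcal{H}^{2p}(A\cap K)<\infty$ for the limit set and hence the existence of a generic projection direction with $A\cap(\overline{\Delta'}\times\partial\Delta'')=\emptyset$. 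The same density inequality is also exactly what prevents sheets of $A_{j_k}$ from accumulating on $\overline{\Delta'}\times\partial\Delta''$ and what bounds the sheet count, so the ``simultaneous control of branch loci'' you worry about is resolved by the same tool. With that substitution your outline becomes the standard, correct proof; the loss of multiplicities you mention at the end is genuine but harmless for the set-theoretic statement being proved.
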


The following result is due to A. Sukhov and A. Tumanov \cite{AT_Symp_Nonsqueeze}, it provides a necessary condition on holomorphic radius for the existence of symplectic embedding. This result will be used in the proof of Theorem \ref{thm:wong2}.
\begin{theorem}
\label{thm:AT_nonsqueeze}
(\cite{AT_Symp_Nonsqueeze}) Let $G_1$ be a domain in $\mathbb{C}^2$ containing the origin and let $G_2$ be a domain in $\mathbb{D}(R)\times\mathbb{C}$ for some $R>0$. Assume there exists a symplectomorphism $\phi: G_1\rightarrow G_2$, then \emph{rh}$(G_1)\leq R$.
\end{theorem}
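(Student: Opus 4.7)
My plan is to adapt Gromov's nonsqueezing argument to produce not merely a pseudo-holomorphic curve but an honest $J_{\mbox{st}}$-analytic subset of $G_1$ through the origin of Euclidean area at most $\pi R^2$, which then yields $\mbox{rh}(G_1)\leq R$ directly from the definition of the holomorphic radius. The guiding idea is to transport the standard complex structure from $G_1$ across to $G_2$ via $\phi$, extend it to a compact symplectic manifold containing $G_2$, apply the existence theorem for pseudo-holomorphic spheres in a suitable homology class through a prescribed point, then intersect the sphere with $G_2$ and pull it back through $\phi^{-1}$.

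Concretely, I would first set $J=\phi_*J_{\mbox{st}}$ on $G_2$. Because $\phi$ is a symplectomorphism and $\omega_{\mbox{st}}$ tames $J_{\mbox{st}}$ on $G_1$, the structure $J$ is tamed by $\omega_{\mbox{st}}$ on $G_2$. I then embed $\mathbb{D}(R)\times\mathbb{C}$ into the product $S^2(R)\times T^2(N)$, where $S^2(R)$ is a 2-sphere of total symplectic area $\pi R^2$ compactifying $\mathbb{D}(R)$, and $T^2(N)=\mathbb{C}/(N\mathbb{Z}+iN\mathbb{Z})$ is a square torus with $N$ chosen so large that the portion of $G_2$ relevant to our construction lies deep inside a fundamental domain. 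By a standard cutoff argument using the openness and local convexity of the set of tame structures, I extend $J$ to an $\omega_{\mbox{st}}$-tamed almost complex structure $\widetilde J$ on $S^2(R)\times T^2(N)$ that coincides with the product complex structure outside a compact neighborhood of the relevant part of $G_2$.

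Next I invoke Gromov's theory on $S^2(R)\times T^2(N)$. For any $\omega$-tame almost complex structure and any marked point $p$, there exists a (possibly reducible) pseudo-holomorphic sphere $C$ through $p$ in the homology class $[S^2(R)\times\{*\}]$; its total $\omega_{\mbox{st}}$-area equals $\pi R^2$. Applying this with $p=\phi(0)$, the intersection $A=C\cap G_2$ is a closed purely 1-dimensional $J$-analytic subset of $G_2$ through $\phi(0)$. Because $\phi^{-1}$ conjugates $J$ to $J_{\mbox{st}}$, the pullback $X=\phi^{-1}(A)\subset G_1$ is a closed purely 1-dimensional complex analytic subset of $G_1$ passing through the origin, hence $X\in\mathcal{O}^1_0(G_1)$.

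To bound the area, observe that $X$ is $J_{\mbox{st}}$-holomorphic, so its Euclidean area equals its $\omega_{\mbox{st}}$-symplectic area; by symplectic invariance this equals the $\omega_{\mbox{st}}$-symplectic area of $A$, which, since $\widetilde J$ is tamed and area contributions on $\widetilde J$-holomorphic pieces are non-negative, is bounded above by the total area of $C$, namely $\pi R^2$. Thus $E(X)\leq\pi R^2$ and therefore $\mbox{rh}(G_1)\leq R$. The principal obstacle is the appeal to Gromov's moduli theory: one must produce the $\widetilde J$-sphere through a prescribed point in the given class, arguing that the evaluation map on the relevant moduli space has degree one. In our setting this is typically done by deforming through tame structures and extracting limits of the resulting curves via Theorem~\ref{thm:bishop}; a secondary but necessary check is that the pullback $X$ is genuinely a closed complex analytic set, not merely the image of a pseudo-holomorphic map.
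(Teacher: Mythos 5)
First, note that the paper does not prove this statement at all: it is quoted verbatim from Sukhov--Tumanov \cite{AT_Symp_Nonsqueeze}, whose own argument runs through the explicit construction of $J$-holomorphic discs in the cylinder with boundary on $\partial\mathbb{D}(R)\times\mathbb{C}$ via a Beltrami-type integral equation (the same machinery quoted later as Theorem~\ref{thm:AT_exist_Jdisc}), followed by pulling the disc back through $\phi^{-1}$. Your route --- compactifying the cylinder to $S^2(R)\times T^2(N)$ and invoking Gromov's existence theorem for spheres in the class $[S^2\times\{*\}]$ through a prescribed point --- is the classical non-squeezing argument and is a genuinely different, viable path to the same conclusion; what the disc-filling approach buys is that it avoids the moduli-space/degree argument entirely and works directly in the non-compact cylinder, which is precisely where your write-up has its one real gap.

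That gap is the treatment of the non-compactness of $G_2$. Since $G_2$ may be unbounded in the $\mathbb{C}$-direction, your cut-off forces $\widetilde J$ to agree with $J=\phi_*J_{\mbox{st}}$ only on a compact portion of $G_2$; consequently $C\cap G_2$ is \emph{not} a $J$-analytic subset of all of $G_2$, and $\phi^{-1}(C\cap G_2)$ is not a closed analytic subset of all of $G_1$ --- it is only analytic over the relatively compact piece where the structures agree. This matters because $\mbox{rh}(G_1)$ is defined through elements of $\mathcal{O}^1_0(G_1)$, i.e.\ closed analytic sets in the whole domain. The fix is standard but must be said: exhaust $G_1$ by relatively compact subdomains $G_1^{(k)}\ni 0$, run your construction for each $k$ (cutting off outside a compact neighborhood of $\overline{\phi(G_1^{(k)})}$) to obtain $X_k\in\mathcal{O}^1_0(G_1^{(k)})$ with $E(X_k)\leq\pi R^2$, and then apply Theorem~\ref{thm:bishop} on compact subsets of $G_1$ to extract a limit $X\in\mathcal{O}^1_0(G_1)$ with $E(X)\leq\pi R^2$; the limit is nonempty and contains the origin because Lelong's lower bound forces each $X_k$ to carry area at least $\pi\delta^2$ in a fixed ball $\mathbb{B}^4(\delta)\subset G_1^{(1)}$ centered at $0$. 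Two smaller points: (i) it is cleaner to give $S^2(R)$ symplectic area $\pi R^2+\epsilon$ and let $\epsilon\to 0$ at the end, rather than asserting an exact-area compactification of the open disc; (ii) your appeal to Gromov's theory for the class $[S^2\times\{*\}]$ is legitimate --- the class is indecomposable since $\pi_2(T^2)=0$, so no bubbling occurs and the evaluation map has degree one --- but this should be stated as the reason compactness holds, not merely flagged as an obstacle.
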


For $v=(v_1,\dots, v_4),w=(w_1,\dots, w_4)\in\mathbb{R}^4$, we denote the real inner product by $\langle v,w\rangle_{\mathbb{R}^4} = \sum^4_{j=1} v_j w_j$. Similarly for $v=(v_1,v_2), w=(w_1,w_2)\in\mathbb{C}^2$, we denote the complex inner product by $\langle v,w\rangle_{\mathbb{C}^2} = \sum^2_{j=1} v_j \overline{w_j}$. Notice that $\langle v,w\rangle_{\mathbb{R}^4} = \mbox{Re}\langle v,w\rangle_{\mathbb{C}^2}$.

By using the properties of inner product, the following lemma can be proved easily.
\begin{lemma}
\label{real-complex-perp}
\mbox{ }
\begin{enumerate}
\item Let $L\in\mathbb{C}^2$ be a real two dimensional plane. Denote by $L^{\perp_{\mathbb{R}^4}}$ the orthogonal complement of $L$ with respect to  $\langle \cdot,\cdot\rangle_{\mathbb{R}^4}$ and by $L^{\perp_{\mathbb{C}^2}}$ the orthogonal complement of $L$ with respect to $\langle \cdot,\cdot\rangle_{\mathbb{C}^2}$. If $L$ is a complex line, that is $v\in L$ if and only if $iv\in L$ for all $v\in\mathbb{C}^2$, then $L^{\perp_{\mathbb{R}^4}}=L^{\perp_{\mathbb{C}^2}}$.
\item If $L\in\mathbb{C}^2$ is a complex line, then $L^{\perp_{\mathbb{C}^2}}$ is also a complex line.
\end{enumerate}
\end{lemma}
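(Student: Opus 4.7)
The entire lemma reduces to the identity $\langle v,w\rangle_{\mathbb{R}^4}=\mathrm{Re}\,\langle v,w\rangle_{\mathbb{C}^2}$ noted in the paragraph preceding the statement, together with the sesquilinearity of the Hermitian form (conjugate-linear in the second slot under the convention $\langle v,w\rangle_{\mathbb{C}^2}=\sum_j v_j\overline{w_j}$). My plan is simply a direct check for each of the two parts.

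For part (1), the inclusion $L^{\perp_{\mathbb{C}^2}}\subseteq L^{\perp_{\mathbb{R}^4}}$ is immediate, since vanishing of a complex number forces vanishing of its real part. For the reverse inclusion I would take $w\in L^{\perp_{\mathbb{R}^4}}$ and any $v\in L$; the $i$-invariance of $L$ guarantees $iv\in L$ as well, so the real-orthogonality hypothesis applies to both $v$ and $iv$. Using $\langle iv,w\rangle_{\mathbb{C}^2}=i\langle v,w\rangle_{\mathbb{C}^2}$ we get
\[
0=\mathrm{Re}\,\langle iv,w\rangle_{\mathbb{C}^2}=\mathrm{Re}\bigl(i\langle v,w\rangle_{\mathbb{C}^2}\bigr)=-\mathrm{Im}\,\langle v,w\rangle_{\mathbb{C}^2},
\]
so both the real and imaginary parts of $\langle v,w\rangle_{\mathbb{C}^2}$ vanish and hence $w\in L^{\perp_{\mathbb{C}^2}}$.

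For part (2), I would verify that $L^{\perp_{\mathbb{C}^2}}$ is closed under multiplication by $i$. If $w\in L^{\perp_{\mathbb{C}^2}}$ and $v\in L$, conjugate-linearity in the second slot gives $\langle v,iw\rangle_{\mathbb{C}^2}=-i\langle v,w\rangle_{\mathbb{C}^2}=0$, so $iw\in L^{\perp_{\mathbb{C}^2}}$. Thus $L^{\perp_{\mathbb{C}^2}}$ is a $\mathbb{C}$-linear subspace of $\mathbb{C}^2$; by part (1) it coincides with $L^{\perp_{\mathbb{R}^4}}$, which has real dimension $4-2=2$, so it is one-dimensional over $\mathbb{C}$, i.e.\ a complex line.

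There is no real obstacle here: the statement is a bookkeeping exercise on the relation between the real and complex inner products. The only place to watch is the sign that appears when $i$ is pulled across the conjugate-linear slot of the Hermitian form, and this is settled once the convention for $\langle \cdot,\cdot\rangle_{\mathbb{C}^2}$ is fixed.
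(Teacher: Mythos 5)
Your proof is correct, and it is exactly the routine verification the paper has in mind: the paper gives no written proof, remarking only that the lemma ``can be proved easily'' from the identity $\langle v,w\rangle_{\mathbb{R}^4}=\mathrm{Re}\,\langle v,w\rangle_{\mathbb{C}^2}$, which is precisely the tool you use. Both the sign computation with $iv$ in part (1) and the conjugate-linearity argument plus dimension count in part (2) are accurate under the paper's convention for $\langle\cdot,\cdot\rangle_{\mathbb{C}^2}$.
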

%\begin{proof}
%(i) Since Re$\langle \cdot,\cdot \rangle_{\mathbb{C}^2} =\langle \cdot,\cdot\rangle_{\mathbb{R}^4}$, we have $L^{\perp_{\mathbb{C}^2}}\subset L^{\perp_{\mathbb{R}^4}}$.
%If $v\in L^{\perp_{\mathbb{R}^4}}$, we have Re$\langle v,w\rangle_{\mathbb{C}^2} = 0$ for all $w\in L$. Since $L$ is a complex line, we know that $w\in L$ if and only if $iw\in L$. Therefore $0= \langle v,iw\rangle_{\mathbb{R}^4}=$Im$\langle v,w\rangle_{\mathbb{C}^2}$ for all $w\in L$. Therefore $L^{\perp_{\mathbb{R}^4}}\subset L^{\perp_{\mathbb{C}^2}}$.
%
%(ii) follows immediately from the linearity property of the inner product.
%\end{proof}

We denote by $\mathfrak{I}$ the set consisting of four diagonal matrices:
$$
\mathfrak{I} = \left\{\begin{pmatrix}
1&&&\\
&a && \\
&&1&\\
&&&b
\end{pmatrix} \mbox{ : } a=\pm 1, b=\pm 1\right\}
$$

The following is the main theorem of this section. We used the canonical identification between complex matrices on $\mathbb{C}^2$ and real matrices on $\mathbb{R}^4$:

\begin{theorem}
\label{thm:wong2}
Let $T\in O(4)$ be an orthogonal transformation. $T\mathbb{D}^2$ is symplectomorphic to $\mathbb{D}^2$ with respect to the standard symplectic form on $\mathbb{R}^4$ if and only if there exists $U\in U(2)$ such that $UT\in\mathfrak{I}$.
\end{theorem}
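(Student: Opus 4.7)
The ``if'' direction is immediate: if $UT=I\in\mathfrak I$ with $U\in U(2)$, then each element of $\mathfrak I$ preserves $\mathbb D^2$ setwise (in complex coordinates it is a partial conjugation, fixing some of the $z_j$ and replacing the others by $\bar z_j$), so $T\mathbb D^2=U^{-1}(I\mathbb D^2)=U^{-1}\mathbb D^2$. Since $U(2)\subset\mathrm{Sp}(4,\mathbb R)$ the inverse $U^{-1}$ is itself a symplectomorphism, giving the required symplectomorphism $\mathbb D^2\to T\mathbb D^2$. For the converse, the plan is to pin down $\mathrm{rh}(T\mathbb D^2)$ and then locate the extremal analytic set.

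Assuming $T\mathbb D^2$ is symplectomorphic to $\mathbb D^2$, Theorem~\ref{thm:AT_nonsqueeze} applied with $G_2=\mathbb D^2\subset\mathbb D(1)\times\mathbb C$ gives $\mathrm{rh}(T\mathbb D^2)\leq 1$. Because $\mathbb D^2\supset\mathbb B^4(1)$ and $T\in O(4)$ is a Euclidean isometry, $T\mathbb D^2\supset\mathbb B^4(1)$, and Lelong's inequality (Example~\ref{eg:Lelong}) gives $\mathrm{rh}(T\mathbb D^2)\geq 1$. Thus $\mathrm{rh}(T\mathbb D^2)=1$. Choose $A_n\in\mathcal O^1_0(T\mathbb D^2)$ with $E(A_n)\to\pi$; Bishop's theorem (Theorem~\ref{thm:bishop}) produces a subsequential Hausdorff limit $A_\infty$, a purely one-dimensional analytic set in $T\mathbb D^2$ containing the origin. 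Lower semicontinuity of area together with the Lelong lower bound forces $E(A_\infty)=\pi$ and Lelong number one at $0$; the equality case in Lelong's monotonicity, plus analytic continuation, identify $A_\infty=\ell\cap T\mathbb D^2$ for some complex line $\ell$ through the origin. In particular $\mathrm{area}(T^{-1}\ell\cap\mathbb D^2)=\pi$, since $T$ preserves Euclidean area.

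The geometric crux is a planar lemma: for every real $2$-plane $P\subset\mathbb R^4$ through the origin, $\mathrm{area}(P\cap\mathbb D^2)\geq\pi$, with equality if and only if $P=\{z_j=0\}$ for some $j\in\{1,2\}$. The inequality follows at once from $P\cap\mathbb D^2\supset P\cap\mathbb B^4(1)$, which is a disc of area $\pi$ in $P$. For the equality case, pick an orthonormal basis $v_1,v_2$ of $P$ and the positive semidefinite forms $q_j(s,t):=|z_j(sv_1+tv_2)|^2$, which satisfy $q_1+q_2=s^2+t^2$; equality of areas demands that $\{q_1<1,q_2<1\}$ coincide with the unit disc in $P$, and hence that $q_1=0$ or $q_2=0$ at every point of the unit circle. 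Since the zero locus of each $q_j$ is a real linear subspace (meeting the circle in at most two points unless it is all of $\mathbb R^2$), some $q_j$ vanishes identically, forcing $P=\{z_j=0\}$. Applied with $P=T^{-1}\ell$ the lemma gives $T\{z_j=0\}=\ell$, a complex line; Lemma~\ref{real-complex-perp} then forces the orthogonal coordinate line $\{z_{3-j}=0\}$ also to be carried by $T$ to a complex line.

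Finally, pick $U\in U(2)$ that sends these two complex images back onto $\{z_1=0\}$ and $\{z_2=0\}$. Then $UT$ preserves each coordinate complex line, so restricts on each to a two-dimensional real orthogonal map, of the form $z\mapsto e^{i\alpha}z$ or $z\mapsto e^{i\alpha}\bar z$. Absorbing the two phases with a diagonal unitary $V\in U(2)$ leaves $VUT\in\mathfrak I$, which is the claim. The main obstacle is the analytic step: extracting the extremal $A_\infty$ from Bishop, recognizing it as a complex line intersected with $T\mathbb D^2$ via Lelong's equality case and analytic continuation, and matching the resulting complex line back to a coordinate axis via the planar lemma. Once a complex line through $T\{z_j=0\}$ has been exhibited, the remaining reduction to $\mathfrak I$ is elementary linear algebra.
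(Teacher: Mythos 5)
Your proof is correct, and while it follows the paper's overall strategy (use Theorem~\ref{thm:AT_nonsqueeze} to force $\mathrm{rh}(T\mathbb{D}^2)\leq 1$, extract an extremal analytic set via Theorem~\ref{thm:bishop}, and show it forces $TH_1$, $TH_2$ to be complex lines), the identification of the extremal set is done by a genuinely different mechanism. The paper localizes the boundary of the extremal set $X$ on the corner circles $TS_1\cup TS_2=\partial\mathbb{B}^4(1)\cap\partial T\mathbb{D}^2$, reflects across these totally real curves, and concludes by uniqueness that $X$ lies in the complexified circles $\widetilde{TS_i}$; the coordinate-line conclusion then comes from observing that $\widetilde{TS_i}$ passes through the origin only when $TH_i$ is a complex line. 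You instead work from the interior: the equality case of Lelong's monotonicity shows the extremal set is a flat disc $\ell\cap\mathbb{B}^4(1)$ for a complex line $\ell$, and your self-contained planar lemma (a real $2$-plane $P$ satisfies $\mathrm{area}(P\cap\mathbb{D}^2)=\pi$ only if $P$ is one of the coordinate complex lines, proved by covering the unit circle with the zero loci of the two semidefinite forms $q_1,q_2$) pins $T^{-1}\ell$ to a coordinate axis. Your route avoids the reflection principle and the complexification of the circles at the cost of invoking the (standard, but deeper) cone characterization in the equality case of the monotonicity formula; the planar lemma is an attractive elementary substitute for the paper's boundary analysis. Two small points to tidy: the assertion $A_\infty=\ell\cap T\mathbb{D}^2$ should be justified via convexity of $T\mathbb{D}^2$ (so $\ell\cap T\mathbb{D}^2$ is connected) and the identity principle, which give $\ell\cap T\mathbb{D}^2\subset A_\infty$ and hence $\mathrm{area}(\ell\cap T\mathbb{D}^2)\leq\pi$ --- which is all the planar lemma needs; and in the equality case of the lemma one should note that an open set of measure zero is empty to pass from ``equal areas'' to ``equal sets.'' Neither is a gap. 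Your closing reduction of $UT$ to $\mathfrak{I}$ by absorbing phases is slightly more explicit than the paper's one-line assertion.
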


\begin{proof}
$(\Leftarrow)$ Suppose there exists an $U\in U(2)$ such that $UT\in\mathfrak{I}$, then we know that $UT\mathbb{D}^2 = \mathbb{D}^2$ as a set. Furthermore $U\in U(2)$ is a linear symplectomorphism on $\mathbb{C}^2$. Hence $T\mathbb{D}^2$ is symplectomorphic to $\mathbb{D}^2$.

$(\Rightarrow)$ Let $(z_1, z_2)$ be the coordinate on $\mathbb{C}^2$. First of all, let $\partial\mathbb{D}^2\cap \partial \mathbb{B}^4(1) = S_1 \cup S_2$ where $S_1 = \{ |z_1| = 1, z_2 = 0\}$ and $S_2 = \{z_1 = 0, |z_2| = 1\}$. Therefore $S_1$ and $S_2$ are contained in the complex line $H_1=\{z_2 = 0\}$ and $H_2 = \{z_1 = 0\}$ respectively. For $i=1,2$, let $u_i, v_i\in\mathbb{C}^2$ be orthonormal basis of $TH_i$ under the real inner product $\langle\cdot,\cdot\rangle_{\mathbb{R}^4}$ on $\mathbb{R}^4$. Note that $TS_i$ can be parameterized by
$$
\frac{1}{2}\left(t+\frac{1}{t}\right)u_i + \frac{1}{2i}\left(t-\frac{1}{t}\right)v_i
$$
for $|t| = 1$ in $\mathbb{C}$. The complexification of $TS_i$, denoted by $\widetilde{TS_i}$, is given by the same parametrization but allowing $t\in\mathbb{CP}^1$. Here $\mathbb{CP}^n$ is the complex projective space of complex dimension $n$. Hence $\widetilde{TS_i}$ is a complex algebraic curve in $\mathbb{CP}^2$ parameterized by $t\in \mathbb{CP}^1$.

Notice that for $i = 1,2$, $\widetilde{TS_i}$ passes through the origin in $\mathbb{C}^2$ if and only if $u_i$ and $v_i$ are $\mathbb{C}$-dependent.

Suppose $T\mathbb{D}^2$ is symplectomorphic to $\mathbb{D}^2$, then Theorem~\ref{thm:AT_nonsqueeze} implies that rh$(T\mathbb{D}^2) \leq 1$. By Theorem \ref{thm:bishop} there exists $X\in \mathcal{O}^1_0(T\mathbb{D}^2)$ such that $E(X) = \pi (\mbox{rh}(T\mathbb{D}^2))^2$. Suppose there exists $p\in \partial X \cap \partial \mathbb{B}^4(1)$ such that $p\in \mbox{Int}(T\mathbb{D}^2)$; then $X$ is not entirely contained in $\mathbb{B}^4(1)$. Hence $E(X) > E(X \cap \mathbb{B}^4(1)) \geq \pi$ (see Example \ref{eg:Lelong}), which implies rh$(T\mathbb{D}^2) >1$, a contradiction. Therefore $\partial X\subset \partial \mathbb{B}^4(1)\cap \partial T\mathbb{D}^2 = TS_1 \cup TS_2$ and $X$ is a complex one dimensional analytic subset in $\mathbb{C}^2 \backslash (TS_1\cup TS_2)$. Since $TS_1\cup TS_2$ is a real one dimensional curve, it is totally real. Hence, by the reflection principle for analytic sets (see, for example, Section 20.5 of \cite{Bishop}), $X$ extends as a complex one dimensional analytic set to a neighborhood of $TS_1\cup TS_2$. By the uniqueness theorem $X$ is contained in the complex algebraic curve $\widetilde{TS_1}\cup\widetilde{TS_2}$.

Since $X$ contains the origin in $\mathbb{C}^2$, without loss of generality we can assume $\widetilde{TS_1}$ contains the origin. By the discussion above, we know that $u_1$ and $v_1$ are $\mathbb{C}$-dependent. Hence $TH_1 = \mbox{span}_{\mathbb{R}}\{u_1, v_1\} = \mbox{span}_{\mathbb{R}}\{u_1, iu_1\} = \mbox{span}_{\mathbb{C}}\{u_1\} = \widetilde{TS_1}$. This shows that $TH_1$ is a complex line.

By Lemma~\ref{real-complex-perp}, $H_2 = H_1^{\perp_{\mathbb{C}^2}} =  H_1^{\perp_{\mathbb{R}^4}}$. Since $T$ is an orthogonal matrix, we have $TH_2 = (TH_1)^{\perp_{\mathbb{R}^4}} = (TH_1)^{\perp_{\mathbb{C}^2}}$ where the last equality follows from Lemma~\ref{real-complex-perp} and the fact that $TH_1 = \widetilde{TS_1}=\mbox{span}_{\mathbb{C}}\{u_1\}$ is a complex line. Therefore Lemma~\ref{real-complex-perp} implies that $TH_2$ is a complex line.

We've shown that if $T$ is orthogonal and $T\mathbb{D}^2$ is symplectomorphic to $\mathbb{D}^2$, then $T$ maps the complex lines $H_1=\{z_2 = 0\}, H_2=\{z_1 = 0\}$ to complex lines $TH_1, TH_2$. Therefore there exists a unitary matrix $U\in U(2)$ such that $UT\in\mathfrak{I}$.
\end{proof}

\section{Symplectic rigidity in high dimensional case}
\label{section3}
Let $\mathbb{D}^m(r)=\{(z_1,\dots, z_m)\in\mathbb{C}^m : |z_j| < r \mbox{ for }j=1,\dots,m\}$ by the $m$-th product of discs of radius $r$. The following is the main theorem in this section:

\begin{theorem}
\label{thm:tri-disc}
For $r\geq 1$ and $n\geq 2$, the domains $\mathbb{D}_{\mathbb{R}}^2\times \mathbb{D}^{n-2}(r)$ and $\mathbb{D}^2\times\mathbb{D}^{n-2}(r)$ in $\mathbb{C}^n$ equipped with the standard symplectic form are not symplectomorphic.
\end{theorem}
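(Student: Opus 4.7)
The strategy is to argue by contradiction, generalising the nonsqueezing inequality of Theorem~\ref{thm:AT_nonsqueeze} to $n$ complex dimensions and combining it with a case analysis of the complex lines through the origin of $\mathbb{D}_{\mathbb{R}}^2\times\mathbb{D}^{n-2}(r)$. Suppose that $\phi:\mathbb{D}_{\mathbb{R}}^2\times\mathbb{D}^{n-2}(r)\to\mathbb{D}^2\times\mathbb{D}^{n-2}(r)$ is a symplectomorphism. Since $r\geq 1$, the target lies in the cylinder $\mathbb{D}(1)\times\mathbb{C}^{n-1}$, with the first factor of $\mathbb{D}^2$ playing the role of the bounded coordinate, and so $\phi$ exhibits $G_1:=\mathbb{D}_{\mathbb{R}}^2\times\mathbb{D}^{n-2}(r)$ as a symplectic subdomain of $\mathbb{D}(1)\times\mathbb{C}^{n-1}$.

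The first step is to establish the following $n$-dimensional analogue of Theorem~\ref{thm:AT_nonsqueeze}: if a domain $G\subset\mathbb{C}^n$ admits a symplectic embedding into $\mathbb{D}(R)\times\mathbb{C}^{n-1}$, then through every point of $G$ there passes a closed complex analytic curve with Euclidean area at most $\pi R^2$. The proof runs as in the two-dimensional Sukhov--Tumanov argument: compactify $\mathbb{D}(R)\times\mathbb{C}^{n-1}$ to $S^2\times T^{2(n-1)}$ with the $S^2$ factor of area slightly exceeding $\pi R^2$; transport $J_{\mbox{st}}$ from the cylinder back to $G$ via the inverse embedding and extend it tamely across the compactification; use Gromov's theorem to produce a pseudoholomorphic sphere in the homology class $[S^2\times\mbox{pt}]$ through any prescribed point of the image of $G$; then deform the tame almost complex structure back to $J_{\mbox{st}}$ and extract a $J_{\mbox{st}}$-analytic limit via Theorem~\ref{thm:bishop}.

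Applied to $G_1$ with $R=1$, this produces, through the origin, a closed analytic curve $X\subset G_1$ with $E(X)\leq\pi$. Because $r\geq 1$ implies $\mathbb{B}^{2n}(1)\subset G_1$, Lelong's inequality (Example~\ref{eg:Lelong}) forces $E(X)=\pi$, and the equality case of Lelong makes $X\cap\mathbb{B}^{2n}(1)$ a piece of a complex line through the origin; analytic continuation gives $X=(\mathbb{C}\cdot v)\cap G_1$ for some unit vector $v\in\mathbb{C}^n$, and $E(X)$ equals the planar area of the domain $D_v:=\{t\in\mathbb{C}:tv\in G_1\}$. The latter is the intersection of the region cut out by $(tv_1,tv_2)\in\mathbb{D}_{\mathbb{R}}^2$---which, by a rescaling argument reducing to a unit vector in $\mathbb{C}^2$, has planar area at least $4$ whenever $(v_1,v_2)\neq(0,0)$---and the disc conditions $|tv_j|<r$ for $j\geq 3$, whose intersection has planar area at least $\pi r^2$ whenever some $v_j$ with $j\geq 3$ is nonzero. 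Combined with the Lelong lower bound, these estimates force the equality $E(X)=\pi$ to be achievable only when $r=1$, $v_1=v_2=0$, and $v=e_j$ for some $j\in\{3,\ldots,n\}$. The theorem follows immediately for $r>1$ or $n=2$.

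The remaining case $r=1$, $n\geq 3$ is the main obstacle: here the holomorphic radius of $G_1$ equals $1$ and is attained by each axis disc $(\mathbb{C}\cdot e_j)\cap G_1$ for $j\in\{3,\ldots,n\}$, so a single application of the generalised nonsqueezing no longer suffices. The plan is to exploit that $\mathbb{D}^2\times\mathbb{D}^{n-2}(1)=\mathbb{D}^n$ admits $n$ distinct symplectic embeddings into $\mathbb{D}(1)\times\mathbb{C}^{n-1}$, one for each choice of bounded coordinate. Each produces, via the pointwise version of the generalised Theorem~\ref{thm:AT_nonsqueeze}, a foliation $\mathcal{F}_k$ ($k=1,\ldots,n$) of $G_1$ by closed analytic curves of area at most $\pi$. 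Since the $n$ coordinate-fibre foliations of the target are pair-wise transverse, so are the pullback foliations $\mathcal{F}_1,\ldots,\mathcal{F}_n$ of $G_1$. The previous analysis forces every leaf of each $\mathcal{F}_k$ to be a translate of $\mathbb{C}\cdot e_{j_k}$ for a single index $j_k\in\{3,\ldots,n\}$ constant on the connected $G_1$, and pair-wise transversality then demands $j_1,\ldots,j_n$ to be pair-wise distinct, which is impossible for $n$ indices drawn from an $(n-2)$-element set. The delicate technical point I expect to need care is the construction of the $\mathcal{F}_k$ as continuous families of analytic curves rather than pointwise-isolated Bishop limits, and the verification that the transversality inherited from the target survives the limiting procedure.
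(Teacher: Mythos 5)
Your reduction of the problem to classifying closed analytic curves of area at most $\pi$ through the origin of $\mathbb{D}_{\mathbb{R}}^2\times\mathbb{D}^{n-2}(r)$ is sound, and your area computation (planar area $\geq 4$ of the slice $D_v$ when $(v_1,v_2)\neq 0$, versus $\pi r^2$ when only coordinates $j\geq 3$ are active) correctly identifies $M_3,\dots,M_n$ as the only candidates when $r=1$; this matches the paper's Lemma~\ref{lemma:n-2discs}, proved there by the reflection principle rather than by the equality case of Lelong. Your handling of $r>1$ and $n=2$ is also correct, and for $r>1$ it is a genuinely different (and arguably cleaner) route than the paper's, which instead inserts a ball $\mathbb{B}^{2n}(a)$ with $a>1$ into $\mathbb{D}_{\mathbb{R}}^2\times\mathbb{D}^{n-2}(r)$ using the embedding $\mathbb{B}^4(r_1)\hookrightarrow\mathbb{D}_{\mathbb{R}}^2$ from \cite{Wong} and then quotes Gromov's non-squeezing theorem directly.

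The genuine gap is in the case $r=1$, $n\geq 3$, at the step where you must show that the $n$ curves through the origin produced by the $n$ coordinate embeddings are pairwise distinct. Your mechanism for this is pairwise transversality of the foliations $\mathcal{F}_k$, asserted because "the coordinate-fibre foliations of the target are pairwise transverse." This does not follow: the leaves of $\mathcal{F}_k$ are not pullbacks of coordinate fibres under a single diffeomorphism; each is a $J_{\mathrm{st}}$-holomorphic curve in $G_1$ obtained as a Bishop limit of $\psi^{-1}$ of pseudoholomorphic curves for the pushed-forward structure, and two such curves through a common point may be tangent or even coincide. Moreover your claim that \emph{every} leaf of $\mathcal{F}_k$ (not just the one through the origin) is a translate of a fixed axis line $\mathbb{C}\cdot e_{j_k}$ is false for leaves through points near $\partial G_1$, where the Lelong bound is weaker, so the counting argument collapses. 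What actually distinguishes the $n$ curves is a homological area-distribution property: the paper uses Theorem~\ref{thm:AT_exist_Jdisc}, whose discs satisfy $\mathrm{Area}(f_j)=\delta_{jk}\pi$ componentwise; this passes to the Bishop limit and gives $\int_{E_k}\omega_j=\delta_{jk}\pi$, forcing the $E_k$, hence the $F_k=\psi^{-1}(E_k)$, to be pairwise distinct, after which the pigeonhole against the $(n-2)$-element set $\{M_3,\dots,M_n\}$ yields the contradiction. Your own construction via Gromov spheres in the class $[S^2\times\mathrm{pt}]$ would in fact supply exactly this invariant, since $\int\omega_j$ over the sphere is determined by its homology class, but you would need to state and carry that property through the limiting procedure in place of the transversality argument; as written, the proof is incomplete precisely in the only case ($r=1$) that is not already covered by known non-squeezing results.
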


We will first give the proof for the case $r>1$ by adapting the idea in the proof of Theorem 2.2 in \cite{Wong}. We will then develop a new method to prove Theorem \ref{thm:tri-disc} for the case $r=1$.

\subsection{The case $r>1$}
In the case $r>1$, theorem \ref{thm:tri-disc} follows from a more general result:
\begin{theorem}
Given $r>1$, for any real number $R>0$, if there exists a symplectic embedding $\phi:\mathbb{D}_{\mathbb{R}}^2\times \mathbb{D}^{n-2}(r) \rightarrow \mathbb{D}(R)\times\mathbb{C}^{n-1}$, then $R>1$.
\end{theorem}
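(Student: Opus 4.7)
The plan is to establish a higher-dimensional analog of the Sukhov-Tumanov non-squeeze theorem (Theorem~\ref{thm:AT_nonsqueeze}) and to apply it to $G_1=\mathbb{D}_{\mathbb{R}}^2\times\mathbb{D}^{n-2}(r)$. Concretely, I would argue that a symplectic embedding into $\mathbb{D}(R)\times\mathbb{C}^{n-1}$ forces the holomorphic radius $\mathrm{rh}(G_1)$ to be bounded above by $R$, and then show that this radius is already strictly greater than $1$ when $r>1$.

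First I would lower-bound the holomorphic radius: $\mathrm{rh}(\mathbb{D}_{\mathbb{R}}^2\times\mathbb{D}^{n-2}(r))>1$. Given a purely one-dimensional analytic subset $X$ through the origin of the product, I would consider the projection $\pi_1: X\to\mathbb{D}_{\mathbb{R}}^2$. In the case $\pi_1$ is constant, $X$ is contained in $\{0\}\times\mathbb{D}^{n-2}(r)$, and a Lelong-type lower bound on polydiscs (in the spirit of Example~\ref{eg:Lelong}) gives $E(X)\geq\pi r^2>\pi$. In the case $\pi_1$ is non-constant, $\pi_1(X)$ is a one-dimensional analytic germ through the origin of $\mathbb{D}_{\mathbb{R}}^2$, and since $\pi_1$ is $1$-Lipschitz, $E(X)\geq E(\pi_1(X))\geq\pi\cdot\mathrm{rh}(\mathbb{D}_{\mathbb{R}}^2)^2>\pi$. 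The strict inequality $\mathrm{rh}(\mathbb{D}_{\mathbb{R}}^2)>1$ is a consequence of the Sukhov-Tumanov non-embeddability of $\mathbb{D}_{\mathbb{R}}^2$ into $\mathbb{D}\times\mathbb{C}$ combined with Theorem~\ref{thm:AT_nonsqueeze}.

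Next I would establish the $n$-dimensional version of Theorem~\ref{thm:AT_nonsqueeze}: if $G_1\subset\mathbb{C}^n$ contains the origin and admits a symplectic embedding $\phi:G_1\to\mathbb{D}(R)\times\mathbb{C}^{n-1}$, then $\mathrm{rh}(G_1)\leq R$. The strategy is to pull back $J_{\mbox{st}}$ on the target via $\phi^{-1}$ to an $\omega_{\mbox{st}}$-tamed almost complex structure $J$ on $G_1$, and then, for each $w\in\mathbb{C}^{n-1}$ near the last $n-1$ coordinates of $\phi(0)$, to consider the standard complex discs $A_w=\phi(G_1)\cap(\mathbb{D}(R)\times\{w\})$, each of Euclidean area at most $\pi R^2$. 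Applying Bishop's theorem (Theorem~\ref{thm:bishop}) to a sequence $A_{w_k}$ on the target side produces a limit standard complex analytic set through $\phi(0)$; transferring this limit back to $G_1$ via $\phi^{-1}$ should yield the desired analytic subset through the origin of area at most $\pi R^2$.

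The hard part is the final transfer: the direct pullback of $A_w$ via $\phi^{-1}$ is only $J$-holomorphic rather than $J_{\mbox{st}}$-holomorphic, while the definition of $\mathrm{rh}(G_1)$ requires a standard complex analytic subset. Resolving this is the technical heart of the Sukhov-Tumanov argument, and adapting it to $n\geq 3$ requires additional care to handle the extra $n-2$ complex parameters in the target beyond those in the original $n=2$ setting, and to ensure that the limit curve remains inside $G_1$ rather than escaping along the unbounded $\mathbb{C}^{n-1}$ factor; the hypothesis $r>1$ should enter to guarantee the latter. Once these issues are handled, combining with the first step yields $1<\mathrm{rh}(G_1)\leq R$, hence $R>1$.
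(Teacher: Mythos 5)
Your route diverges sharply from the paper's and has several genuine gaps. The most basic is the claim $\mathrm{rh}(\mathbb{D}^2_{\mathbb{R}})>1$: you derive it from the non-embeddability of $\mathbb{D}^2_{\mathbb{R}}$ into $\mathbb{D}\times\mathbb{C}$ ``combined with Theorem~\ref{thm:AT_nonsqueeze}'', but that theorem gives only the implication (symplectic embedding exists) $\Rightarrow$ ($\mathrm{rh}\leq R$); its contrapositive lets you deduce non-embeddability \emph{from} a lower bound on $\mathrm{rh}$, not the reverse, so you are using the converse of the stated result. (The inequality $\mathrm{rh}(\mathbb{D}^2_{\mathbb{R}})>1$ is in fact true, but proving it requires the Bishop-limit and reflection argument of the kind used in Theorem~\ref{thm:wong2} and Lemma~\ref{lemma:n-2discs}, not a one-line citation.) Second, in the case where $\pi_1$ is non-constant you treat $\pi_1(X)$ as an element of $\mathcal{O}^1_0(\mathbb{D}^2_{\mathbb{R}})$, but the image of a closed analytic set under a coordinate projection is in general not a closed analytic subset (properness fails), so the definition of $\mathrm{rh}$ does not apply to it; the $1$-Lipschitz estimate only bounds the Hausdorff measure of a possibly non-analytic image. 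Third, the entire second half --- the $n$-dimensional analogue of Theorem~\ref{thm:AT_nonsqueeze} --- is asserted rather than proved: you yourself identify the transfer of the limit curve back through $\phi^{-1}$ as ``the technical heart'' and leave it open. Your sketch also runs in the wrong direction relative to the actual Sukhov--Tumanov argument, which pushes $J_{\mbox{st}}$ \emph{forward} from $G_1$ to the cylinder, produces tamed $J$-holomorphic discs there via an existence theorem (Theorem~\ref{thm:AT_exist_Jdisc}), and pulls them back so that they become honest analytic sets in $G_1$; the standard discs $\mathbb{D}(R)\times\{w\}$ you propose have no reason to lie in $\phi(G_1)$ or to pull back to anything analytic. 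Finally, it is never made quantitative where the hypothesis $r>1$ enters.

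The paper's proof is far shorter and avoids holomorphic radius entirely: by \cite{Wong} there is a symplectic embedding $\mathbb{B}^4(r_1)\hookrightarrow\mathbb{D}^2_{\mathbb{R}}$ for any $1<r_1<2/\sqrt{\pi}$, hence $\mathbb{B}^{2n}(a)$ with $a=\min(r,r_1)>1$ embeds symplectically into $\mathbb{D}^2_{\mathbb{R}}\times\mathbb{D}^{n-2}(r)$; composing with $\phi$ and applying Gromov's non-squeezing theorem gives $R\geq a>1$. This is exactly where $r>1$ is used. Your program, even if completed, would amount to redoing the much harder $r=1$ machinery of Section~\ref{section3} for a case that needs none of it.
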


\begin{proof}
For $R>0$, suppose there exists a symplectic embedding from $\mathbb{D}_{\mathbb{R}}^2\times \mathbb{D}^{n-2}(r)$ into $\mathbb{D}(R)\times\mathbb{C}^{n-1}$. It is proved in \cite{Wong} that for every $1\leq r_1<\frac{2}{\sqrt{\pi}}$, there is a symplectic embedding from $\mathbb{B}^4(r_1)$ into $\mathbb{D}^2_{\mathbb{R}}$. Take $1< r_1<\frac{2}{\sqrt{\pi}}$, then by combining these two embeddings, we obtain an embedding from $\mathbb{B}^{2n}(a)$ into $\mathbb{D}(R)\times\mathbb{C}^{n-1}$ where $a=\min(r, r_1)>1$. Therefore, by Gromov's non-squeezing theorem \cite{Gromov}, we have $R>1$.
\end{proof}

\subsection{The case $r = 1$}

In order to prove the case $r=1$, we need the following theorem regarding the existence of $J$-holomorphic discs, which is due to A. Sukhov and A. Tumanov \cite{AT_exist_Jdisc}. The original statement was about the triangular cylinder $\Delta\times \mathbb{C}^{n-1}$ where $\Delta = \{z\in\mathbb{C}: 0<\mbox{Im}z<1-|\mbox{Re}z|\}$ instead of the circular cylinder $\mathbb{D}\times \mathbb{C}^{n-1}$. However, one can see the result still holds for the circular cylinder by applying an area preserving map of the triangle to the disc.

%However the result still holds for the circular cylinder since $\mathbb{D}$ and $\Delta$ are biholomorphic.

\begin{theorem} (A. Sukhov and A. Tumanov \cite{AT_exist_Jdisc})
\label{thm:AT_exist_Jdisc}
Let $A$ be a continuous $n\times n$ matrix function on $\mathbb{C}^n$ with compact support in $\mathbb{D}\times\mathbb{C}^{n-1}$. Suppose there is a constant $0<a<1$ such that

\begin{equation}
\label{condition:complex_matrix}
\|A(z)\|\leq a, \forall z\in\mathbb{D}\times\mathbb{C}^{n-1}.
\end{equation}

Then there exists $p>2$ such that for every point $x\in \mathbb{D}\times\mathbb{C}^{n-1}$ there is a solution $Z\in W^{1,p}(\mathbb{D})$ (Sobolev space) of equation (\ref{eq:CauchyRiemann2})
$$
Z_{\overline{\zeta}} = A(Z)\overline{Z_{\overline{\zeta}}}
$$
such that $Z(\overline{\mathbb{D}})\subset\overline{\mathbb{D}\times\mathbb{C}^{n-1}}, x\in Z(\mathbb{D})$, \emph{Area}$(Z)=\pi$ and
$$
Z(\partial\mathbb{D})\subset\partial (\mathbb{D}\times\mathbb{C}^{n-1}) = (\partial\mathbb{D})\times\mathbb{C}^{n-1}.
$$
Furthermore, if we denote the components of $Z$ by $Z = (f_1, \dots, f_n)$, then we have the following area property
$$
\mbox{\emph{Area}}(f_1) = \pi, \mbox{\emph{Area}}(f_j) = 0, \mbox{ for }j=2,\dots,n.
$$
\end{theorem}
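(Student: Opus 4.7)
The result is attributed to Sukhov--Tumanov \cite{AT_exist_Jdisc}, so the only genuinely new content for us to verify is the passage from their original triangular cylinder $\Delta\times\mathbb{C}^{n-1}$ to the circular cylinder $\mathbb{D}\times\mathbb{C}^{n-1}$. My plan is to effect this transition via an area-preserving identification of the two cross-sections, transport the almost complex structure through it, and push forward the $J$-holomorphic disc provided by the triangular version.

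First I would choose $c>0$ so that the rescaled triangle $\Delta_c:=c\,\Delta$ has Euclidean area $\pi$ (so $c=\sqrt{\pi}$), and produce an area-preserving diffeomorphism $\Psi:\Delta_c\to\mathbb{D}$ by a Moser-type argument applied to the two area forms on a common reference domain. Extending trivially in the remaining coordinates, set $\Phi:=\Psi\times\mbox{id}_{\mathbb{C}^{n-1}}:\Delta_c\times\mathbb{C}^{n-1}\to\mathbb{D}\times\mathbb{C}^{n-1}$. Because $\omega_{\mbox{st}}$ splits as the area form on the first factor plus the standard form on the remaining factors, $\Phi$ is a symplectomorphism; and by construction it sends $\partial\Delta_c\times\mathbb{C}^{n-1}$ onto $\partial\mathbb{D}\times\mathbb{C}^{n-1}$.

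Given $A$ on $\mathbb{D}\times\mathbb{C}^{n-1}$ satisfying the stated hypotheses, let $J_A$ be the associated almost complex structure, pull back to $J':=\Phi^{*}J_A$ on $\Delta_c\times\mathbb{C}^{n-1}$, and write $A'$ for the corresponding matrix. Since tameness with respect to $\omega_{\mbox{st}}$ is preserved by symplectomorphisms, the lemma of Audin gives $\|A'(w)\|<1$ pointwise. Applying the triangular version of the Sukhov--Tumanov theorem to $A'$ at the point $\Phi^{-1}(x)$ produces a $J'$-holomorphic disc $Z'\in W^{1,p}(\mathbb{D})$ with boundary on $\partial\Delta_c\times\mathbb{C}^{n-1}$, total area $\pi$, and vanishing area for components $j\geq 2$. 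Then $Z:=\Phi\circ Z'$ is the required $J_A$-holomorphic disc: it passes through $x$, has boundary on $\partial\mathbb{D}\times\mathbb{C}^{n-1}$ because $\Phi$ maps boundary to boundary, has total symplectic area $\pi$ because $\Phi$ preserves $\omega_{\mbox{st}}$, and satisfies $\mbox{Area}(f_j)=0$ for $j\geq 2$ because $\Phi$ acts as the identity on the last $n-1$ factors so the components $f_j$ coincide with those of $Z'$.

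The main obstacle is matching the hypotheses of the original theorem after pullback. The compact support of $A$ does \emph{not} immediately transfer to $A'$: because $\Psi$ is not holomorphic, $\Phi^{*}J_{\mbox{st}}\neq J_{\mbox{st}}$, so $A'$ need not vanish outside $\Phi^{-1}(\mbox{supp}(A))$. The cleanest way to handle this is to observe that the uniform bound $\|A'\|\leq a'<1$ still holds on all of $\Delta_c\times\mathbb{C}^{n-1}$ (by tameness plus a compactness argument, since $A'$ is determined only by the geometry of $\Phi$ away from $\Phi^{-1}(\mbox{supp}(A))$, and that geometry is uniform in the $\mathbb{C}^{n-1}$-direction), and then to notice that the Sukhov--Tumanov existence proof in \cite{AT_exist_Jdisc} only uses a uniform tameness bound plus an ``asymptotically standard'' behavior in the fiber direction, not the strict compact-support hypothesis. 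This is the one technical point the author's hint glosses over, and where I expect the real work to lie.
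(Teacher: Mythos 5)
First, a point of comparison: the paper does not prove this statement at all. It is imported from Sukhov--Tumanov, and the only original content is the one-sentence remark preceding it, namely that the triangular cylinder $\Delta\times\mathbb{C}^{n-1}$ of the original theorem may be replaced by $\mathbb{D}\times\mathbb{C}^{n-1}$ ``by applying an area-preserving map of the triangle to the disc.'' Your proposal is the natural way to expand that remark, and to your credit you isolate exactly the point the paper glosses over: since an area-preserving diffeomorphism $\Psi$ between the triangle and the disc can never be holomorphic, the transported structure $A'$ does not vanish off a compact set, so the compact-support hypothesis of the triangular version is lost in transit.

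Your proposed repair, however, has a genuine gap. The asserted uniform bound $\|A'\|\le a'<1$ on all of $\Delta_c\times\mathbb{C}^{n-1}$ is false in general: at a corner of the triangle any diffeomorphism onto the disc must open the interior angle ($\pi/4$ or $\pi/2$) up to $\pi$, so $d\Psi$ cannot stay bounded together with its inverse there; combined with $\det d\Psi=1$, i.e.\ $|\Psi_\zeta|^2-|\Psi_{\bar\zeta}|^2=1$, this forces $|\Psi_{\bar\zeta}/\Psi_\zeta|=\sqrt{1-|\Psi_\zeta|^{-2}}\to 1$, hence $\|A'\|\to 1$, near the corners. Pointwise tameness gives $\|A'(w)\|<1$ at each $w$ of the open cylinder but no uniform bound, precisely because the relevant set is not compact. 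The second half of your repair --- that the Sukhov--Tumanov argument ``only uses'' a uniform tameness bound plus asymptotically standard behavior rather than compact support --- is an unverified assertion about the internals of a proof you have not examined, and given the failure of the uniform bound it does not apply anyway. A workable route is to transport in the other order: fix a relatively compact neighborhood $U$ of $\mathrm{supp}(A)\cup\{x\}$ in $\mathbb{D}\times\mathbb{C}^{n-1}$, pull $J_A$ back over $U$ only, and damp the resulting matrix to zero by a cut-off $\chi A'$ (admissible because the tameness condition $\|\cdot\|<1$ is convex) supported in the compact set $(\Psi\times\mathrm{id})^{-1}(\overline{U})$, on which continuity plus tameness do yield a uniform constant $a''<1$. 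This produces the discs needed for the structures actually occurring in the proof of Theorem \ref{thm:tri-disc} (push-forwards of $J_{\mathrm{st}}$ already cut off inside $\mathbb{D}^n$); for the literal statement of Theorem \ref{thm:AT_exist_Jdisc} one must additionally account for the portion of the disc outside $U$, where the modified structure no longer agrees with $J_A$, which is why the cleanest option is to run the entire limiting argument of Section \ref{section3} inside the triangular cylinder and never invoke the circular version at all.
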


For $1\leq j\leq n$, let $M_j$ be the holomorphic disc $M_j=(m_1,\dots, m_n):\mathbb{D}\rightarrow\mathbb{C}^n$ where $m_k(z) = 0$ if $k\neq j$ and $m_j(z) = z$. Notice that the minimal area of an analytic set passing through the origin in $\mathbb{D}_{\mathbb{R}}^2\times \mathbb{D}^{n-2}$ is $\pi$, this is because $\mathbb{B}^{2n}\subset \mathbb{D}_{\mathbb{R}}^2\times \mathbb{D}^{n-2}$ and the minimal area of analytic set of $\mathbb{B}^{2n}$ passing through the origin is $\pi$ (Lelong 1950; see \cite{Bishop}).

\begin{lemma}
\label{lemma:n-2discs}
The minimal analytic set of $\mathbb{D}_{\mathbb{R}}^2\times \mathbb{D}^{n-2}$ through the origin is given by one of the $n-2$ distinct holomorphic discs $M_3, \dots, M_n$.
\end{lemma}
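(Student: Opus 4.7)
The plan is to show that every purely one-dimensional closed analytic subset $X\subset\Omega:=\mathbb{D}_{\mathbb{R}}^2\times\mathbb{D}^{n-2}$ through the origin with minimum area $E(X)=\pi$ must coincide with $M_j(\mathbb{D})$ for some $j\in\{3,\dots,n\}$. I would proceed in three stages: identify $X$ locally as a piece of a complex line, extend this identification globally, and then determine which complex line can actually occur.

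For the local step I would exploit the inclusion $\mathbb{B}^{2n}\subset\Omega$ (immediate from $|x_1|^2+|x_2|^2\leq\sum_j(x_j^2+y_j^2)$ and similarly for the $y_j$'s). The intersection $X\cap\mathbb{B}^{2n}$ is then a purely one-dimensional analytic subset of $\mathbb{B}^{2n}$ through the origin, so Lelong's theorem (Example~\ref{eg:Lelong}) yields $E(X\cap\mathbb{B}^{2n})\geq\pi$. Combined with $E(X\cap\mathbb{B}^{2n})\leq E(X)=\pi$, equality holds throughout, and the equality case of Lelong's inequality---obtained from the monotonicity of the density quotient $E(X\cap\mathbb{B}^{2n}(r))/\pi r^2$ on $(0,1]$ together with the rigidity of Lelong numbers---identifies $X\cap\mathbb{B}^{2n}=L\cap\mathbb{B}^{2n}$ for some complex line $L$ through the origin.

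For the global step let $\Omega_0$ denote the connected component of $L\cap\Omega$ through the origin and $X_0$ the irreducible component of $X$ through the origin. Both are closed irreducible purely one-dimensional analytic subsets of $\Omega$ agreeing on the nonempty open set $L\cap\mathbb{B}^{2n}$, so by the identity theorem for irreducible analytic sets $X_0=\Omega_0$. Since $L\cap\mathbb{B}^{2n}\subseteq\Omega_0$ are open subsets of $L$ with equal area $\pi$, we conclude $\Omega_0=L\cap\mathbb{B}^{2n}$; the area chain $\pi=E(X)\geq E(X_0)=\pi$ rules out further components of $X$, giving $X=L\cap\mathbb{B}^{2n}$. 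In particular the connected component of $L\cap\Omega$ through the origin is exactly the open unit disc of $L$.

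The main step is a case analysis on $L=\mathbb{C}v$ (with $|v|=1$): since $\Omega_0=\{tv:|t|<1\}$, every $tv$ with $|t|=1$ must lie on $\partial\Omega$. The useful identity is $\sum_{j=1}^{2}(|\mathrm{Re}(tv_j)|^2+|\mathrm{Im}(tv_j)|^2)=|v_1|^2+|v_2|^2$ for $|t|=1$. If $|v_k|<1$ for every $k\geq 3$ and some such $v_k$ is nonzero, then the right-hand side is strictly less than $1$, so neither real-bidisc inequality can be tight on $|t|=1$ and no $|tv_k|^2=1$ either, giving $tv\in\Omega$ (interior), a contradiction. If instead $v_k=0$ for all $k\geq 3$, a direct computation in $t=s+ir$ writes the two real-bidisc inequalities as a pair of quadratic inequalities in $(s,r)$ whose joint solution is never the open unit disc; concretely, when $L$ is the complexification of a real line the set is the open square $\{|s|<1,|r|<1\}$ of area $4$, and otherwise the unit circle of $L$ fails to lie entirely in $\partial\Omega$. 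The only remaining possibility is $|v_k|=1$ for some $k\geq 3$, forcing $v=\lambda e_k$ with $|\lambda|=1$ and $X=L\cap\Omega=M_k(\mathbb{D})$.

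The main obstacle I anticipate is invoking the equality case of Lelong's inequality with adequate precision; the analytic continuation step and the geometric case analysis on $v$ then follow by direct computation.
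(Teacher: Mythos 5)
Your argument is correct, but it identifies the minimizer by a mechanism genuinely different from the paper's. The paper starts from the same Lelong lower bound on $\mathbb{B}^{2n}\subset\mathbb{D}_{\mathbb{R}}^2\times\mathbb{D}^{n-2}$, but then argues that minimality forces $E\subset\overline{\mathbb{B}^{2n}}$ and hence $\partial E\subset\partial\mathbb{B}^{2n}\cap\partial(\mathbb{D}_{\mathbb{R}}^2\times\mathbb{D}^{n-2})=S_1\cup\dots\cup S_n$; it then applies the reflection principle across these totally real circles and the uniqueness theorem to place $E$ inside the complexification of the circles, and finally discards $S_1\cup S_2$ because their complexification $\{z_1^2+z_2^2=1,\,z_3=\dots=z_n=0\}$ misses the origin. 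You instead exploit the \emph{equality case} of Lelong's inequality: since $E(X\cap\mathbb{B}^{2n})=\pi$ exactly, the rigidity in the monotonicity of $r\mapsto E(X\cap\mathbb{B}^{2n}(r))/\pi r^2$ forces $X\cap\mathbb{B}^{2n}$ to be a flat disc $L\cap\mathbb{B}^{2n}$, after which the identity principle for irreducible analytic sets and an elementary inspection of which complex lines $L=\mathbb{C}v$ meet the domain in exactly their unit disc (your three cases on the $v_k$, $k\geq 3$, all check out, including the connectivity/intermediate-value argument needed to rule out the case $v_3=\dots=v_n=0$) complete the proof. What your route buys is the avoidance of the reflection principle and of the complexification of the boundary circles; what it costs is reliance on the cone-rigidity form of Lelong's theorem, which is a strictly stronger input than the area inequality quoted in Example~\ref{eg:Lelong} and is the one step you should support with a precise reference (it is classical and can be extracted from the proof of the monotonicity formula in \cite{Bishop}, but it is not the statement the paper itself cites). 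Both proofs end with an elementary identification and both correctly yield the discs $M_3,\dots,M_n$.
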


\begin{proof}
Let $S_1=\{x_1^2 + x_2^2 = 1, y_1=y_2=0, z_3=\dots=z_n=0\}, S_2=\{y_1^2 + y_2^2 = 1, x_1=x_2=0, z_3=\dots=z_n=0\}, S_j= \{|z_j| = 1, z_k = 0 \mbox{ for } k \neq j\} \mbox{ for } 3\leq j\leq n$. By using Lelong's result (see \cite{Bishop}) and the argument in proof of Theorem \ref{thm:wong2}, we conclude that the boundary of the analytic set $E$ of minimal area in $\mathbb{D}_{\mathbb{R}}^2\times \mathbb{D}^{n-2}$ through the origin must lie in the intersection of the boundary of $\mathbb{B}^{2n}$ and the boundary of $\mathbb{D}_{\mathbb{R}}^2\times \mathbb{D}^{n-2}$, notice that this intersection consists of $n$ circles $S_1, \dots, S_n$. Suppose a boundary point of $E$ lies in $S_1\cup S_2$, then $E$ must have a component lying in the complexification of $S_1\cup S_2$, which is given by $\{z_1^2+z_2^2=1, z_3=\cdots=z_n=0\}$, in fact all of $E$ lies in this set since $E$ is of minimal area. However $\{z_1^2+z_2^2=1, z_3=\dots=z_n=0\}$ does not pass through the origin, so the boundary of $E$ is contained in the circles $S_3\cup\dots\cup S_n$. Hence $E$ is one of the discs $M_3, \dots, M_n$.
\end{proof}

The following lemma is a consequence of Lemma \ref{lemma:n-2discs} and Theorem \ref{thm:bishop}:

\begin{lemma}
\label{lemma:limit_analytic_set}
Let $E_j$ be a convergent sequence of analytic sets in $\mathbb{D}_{\mathbb{R}}^2\times \mathbb{D}^{n-2}$ passing through the origin so that
$$
\lim_{j\rightarrow\infty}\mbox{\emph{Area}}(E_j) =\pi.
$$
Then the limiting analytic set $E_{\infty}$ is one of the $n-2$ distinct holomorphic discs $M_3, \dots, M_n$.
\end{lemma}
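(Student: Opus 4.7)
The plan is to identify $E_\infty$ with a minimal analytic set through the origin and then invoke Lemma~\ref{lemma:n-2discs}. I would proceed in four steps.

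First, I would verify the hypotheses of Bishop's convergence theorem (Theorem~\ref{thm:bishop}) for the sequence $\{E_j\}$. Since $\mathbb{D}_{\mathbb{R}}^2 \times \mathbb{D}^{n-2}$ is bounded and $\mathrm{Area}(E_j) \to \pi$, the $2$-dimensional volumes of $E_j$ are uniformly bounded on compact subsets. Passing to a subsequence if necessary, Bishop's theorem gives that the Hausdorff limit $E_\infty$ is either empty or a purely one-dimensional analytic subset of $\mathbb{D}_{\mathbb{R}}^2 \times \mathbb{D}^{n-2}$. Since $0 \in E_j$ for every $j$ and the $E_j$ converge in the Hausdorff sense on compacta, the origin lies in $E_\infty$; in particular, $E_\infty$ is nonempty.

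Next, I would show $\mathrm{Area}(E_\infty) = \pi$. The upper bound follows from the lower semicontinuity of the area functional for analytic sets under Bishop-type convergence: $\mathrm{Area}(E_\infty) \le \liminf_{j\to\infty}\mathrm{Area}(E_j) = \pi$. For the matching lower bound, note that $\mathbb{B}^{2n} \subset \mathbb{D}_{\mathbb{R}}^2 \times \mathbb{D}^{n-2}$, so $E_\infty \cap \mathbb{B}^{2n}$ is a one-dimensional analytic set in the unit ball passing through the origin. Lelong's inequality (as recalled in Example~\ref{eg:Lelong}) then yields $\mathrm{Area}(E_\infty) \ge \mathrm{Area}(E_\infty \cap \mathbb{B}^{2n}) \ge \pi$. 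Combining the two bounds gives equality.

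Finally, I would invoke Lemma~\ref{lemma:n-2discs}: since $E_\infty$ is a one-dimensional analytic set in $\mathbb{D}_{\mathbb{R}}^2 \times \mathbb{D}^{n-2}$ through the origin of the minimal possible area $\pi$, it must coincide with one of the discs $M_3, \ldots, M_n$. This identifies the subsequential limit; and since any subsequence of $\{E_j\}$ admits a further subsequence converging to some $M_k$, the full sequence converges to one of the $M_k$'s as claimed.

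The main obstacle is the second step: justifying the lower semicontinuity of area under Hausdorff convergence of analytic sets. Although this is standard in the theory (it is typically packaged together with Bishop's convergence theorem, cf.\ the reference \cite{Bishop}), care is needed because the ambient domain is only relatively compact, so one must localize to compact subsets of $\mathbb{D}_{\mathbb{R}}^2 \times \mathbb{D}^{n-2}$ before taking the liminf and then exhaust. Once the semicontinuity is in hand, the remaining steps follow cleanly from the results already proved.
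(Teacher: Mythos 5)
Your argument is correct and is exactly the route the paper intends: the paper gives no written proof, stating only that the lemma ``is a consequence of Lemma~\ref{lemma:n-2discs} and Theorem~\ref{thm:bishop},'' and you supply precisely that derivation (Bishop compactness, $\mathrm{Area}(E_\infty)=\pi$ via semicontinuity plus Lelong's bound on $\mathbb{B}^{2n}\subset\mathbb{D}_{\mathbb{R}}^2\times\mathbb{D}^{n-2}$, then Lemma~\ref{lemma:n-2discs}). Your closing subsequence remark is unnecessary---and by itself would not rule out oscillation among the $M_k$---but it is harmless here since convergence of $\{E_j\}$ is already a hypothesis of the lemma.
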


Our proof of Theorem \ref{thm:tri-disc} in the case $r=1$ is based on the fact that the domains $\mathbb{D}_{\mathbb{R}}^2\times \mathbb{D}^{n-2}$ and $\mathbb{D}^{n}$ have different number of analytic sets of minimum area through the origin. We are now ready to prove the main theorem in this section.

\begin{theorem}
The domains $\mathbb{D}_{\mathbb{R}}^2\times \mathbb{D}^{n-2}$ and $\mathbb{D}^{n}$ equipped with the standard symplectic form on $\mathbb{C}^n$ are not symplectomorphic.
\end{theorem}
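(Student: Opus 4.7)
\emph{Strategy.} The plan is to produce $n$ pairwise distinct $J_{\mbox{st}}$-analytic sets of area $\pi$ through the origin of $\mathbb{D}_{\mathbb{R}}^2\times\mathbb{D}^{n-2}$, which contradicts Lemma~\ref{lemma:n-2discs} since only $n-2$ such sets exist, namely $M_3,\dots,M_n$. Assume for contradiction that $\phi:\mathbb{D}_{\mathbb{R}}^2\times\mathbb{D}^{n-2}\to\mathbb{D}^n$ is a symplectomorphism; after a symplectic translation, one may take $\phi(0)=0$.

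\emph{Construction via $n$ cylinder structures.} Push forward: $J_0:=\phi_* J_{\mbox{st}}$ is an almost complex structure on $\mathbb{D}^n$ tamed by $\omega_{\mbox{st}}$, so $\|A\|<1$. For each $j=1,\dots,n$, view $\mathbb{D}^n\subset\mathbb{D}_{(j)}\times\mathbb{C}^{n-1}$, where the first factor is the $j$-th coordinate. Extend $J_0$ to a sequence of almost complex structures $J_0^{(j,m)}$ on $\mathbb{D}_{(j)}\times\mathbb{C}^{n-1}$ which agree with $J_0$ on $\mathbb{D}^n$, satisfy $\|A\|<1$ globally, and equal $J_{\mbox{st}}$ outside a compact set that shrinks to $\overline{\mathbb{D}^n}$ as $m\to\infty$. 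By Theorem~\ref{thm:AT_exist_Jdisc}, for each $m$ there is a $J_0^{(j,m)}$-holomorphic disc $Z^{(j,m)}$ through $0$ of area $\pi$, with $Z^{(j,m)}(\partial\mathbb{D})\subset\partial\mathbb{D}_{(j)}\times\mathbb{C}^{n-1}$, enjoying the area property $\mbox{Area}(f_j^{(j,m)})=\pi$ and $\mbox{Area}(f_l^{(j,m)})=0$ for $l\neq j$.

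\emph{Limit and pigeonhole.} Let $Z^{(j)}_\infty$ be a subsequential limit of $\{Z^{(j,m)}\}_m$, extracted using Theorem~\ref{thm:bishop} together with Gromov compactness for the maps. Inside $\mathbb{D}^n$ the limit is $J_0$-analytic, so $X^{(j)}:=\phi^{-1}(Z^{(j)}_\infty\cap\mathbb{D}^n)$ is a $J_{\mbox{st}}$-analytic subset of the source through $0$. Since $\mathbb{B}^{2n}\subset\mathbb{D}_{\mathbb{R}}^2\times\mathbb{D}^{n-2}$, Lelong's lower bound forces $\mbox{Area}(X^{(j)})\geq\pi$; combined with the a priori bound $\mbox{Area}(Z^{(j,m)})=\pi$, equality holds and $Z^{(j)}_\infty\subset\overline{\mathbb{D}^n}$. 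By Lemma~\ref{lemma:n-2discs}, $X^{(j)}=M_{k(j)}$ for some $k(j)\in\{3,\dots,n\}$, hence $Z^{(j)}_\infty=\phi(M_{k(j)})$ as sets. If $k(j_1)=k(j_2)$ with $j_1\neq j_2$, then $Z^{(j_1)}_\infty$ and $Z^{(j_2)}_\infty$ are both degree-one parametrizations of the same embedded disc $\phi(M_{k(j_1)})$, hence differ by a conformal reparametrization of $\mathbb{D}$. The coordinate areas $\mbox{Area}(f_l)$ are invariant under such reparametrizations and pass to the limit by continuity of $\int f_l^*(\frac{i}{2}\,dz\wedge d\overline{z})$; this would force the $j_1$-th coordinate of $\phi(M_{k(j_1)})$ to have area both $\pi$ and $0$, a contradiction. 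Thus $k:\{1,\dots,n\}\to\{3,\dots,n\}$ is injective, which is impossible since $n>n-2$.

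\emph{Main obstacle.} The delicate step is the convergence analysis: one must prevent bubbling and escape of mass into the cylinder complement of $\overline{\mathbb{D}^n}$, and identify the limit as a single embedded degree-one $J_0$-holomorphic disc for which the coordinate-area property of Theorem~\ref{thm:AT_exist_Jdisc} is preserved. The taming condition (giving the uniform area bound $\pi$), the fact that $J_0^{(j,m)}\equiv J_0$ on $\mathbb{D}^n$, and the Lelong lower bound on any analytic subset through the origin together rule out the bad degenerations, but turning this picture into a rigorous argument is the technical heart of the proof.
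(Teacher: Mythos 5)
Your overall strategy is the same as the paper's: for each of the $n$ coordinate directions, view $\mathbb{D}^n$ inside a cylinder, produce a pseudo-holomorphic disc of area $\pi$ through the origin with coordinate areas $\delta_{jk}\pi$ via Theorem~\ref{thm:AT_exist_Jdisc}, pull back by the symplectomorphism, and pigeonhole the resulting $n$ distinct analytic sets against the $n-2$ discs allowed by Lemma~\ref{lemma:n-2discs}. However, there is a genuine gap in your construction step. You ask for almost complex structures $J_0^{(j,m)}$ on the cylinder that \emph{agree with $J_0$ on all of $\mathbb{D}^n$}, are continuous with compact support for their matrix $A$, and satisfy the uniform bound $\|A\|\leq a<1$ required by Theorem~\ref{thm:AT_exist_Jdisc}. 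Such an extension need not exist: $\psi$ is only a symplectomorphism of open domains, so the matrix $\widetilde A$ of $J_0$ is merely continuous on the open polydisc, need not extend continuously to $\overline{\mathbb{D}^n}$, and $\sup_{\mathbb{D}^n}\|\widetilde A\|$ may equal $1$. The paper circumvents this by setting $A_l=\chi_l\widetilde A$ with $\chi_l$ supported \emph{inside} $\mathbb{D}^n$ and equal to $1$ only on a compact exhaustion $K_l$; the price is that $f_l$ is $J_0$-holomorphic only on $K_l$, which forces the double limit $F_N=\lim_l\psi^{-1}(f_l(\mathbb{D})\cap K_N)$ followed by $F=\lim_N F_N$, both times via Theorem~\ref{thm:bishop} applied in the source where the sets are genuinely $J_{\mbox{st}}$-analytic. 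Your single-limit picture silently assumes the problematic global extension.

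A secondary issue: you extract the limit with ``Gromov compactness for the maps'' and identify it as a single embedded degree-one disc, then argue distinctness via conformal reparametrization. This imports machinery whose hypotheses (no bubbling, convergence of parametrizations for varying structures $J_0^{(j,m)}$, embeddedness of the limit) you do not verify, and none of it is needed. The paper stays at the level of analytic \emph{sets} and area integrals: it shows the mass of $f_l(\mathbb{D})$ outside $\mathbb{D}^n$ tends to $0$, concludes $\int_{E_k}\omega_j=\delta_{jk}\pi$ for the limit sets $E_k$, and reads off distinctness of the $E_k$ (hence of the $F_k=\psi^{-1}(E_k)$) directly from these integrals. Your pigeonhole conclusion is correct, but to make the argument rigorous you should replace the extension step by the cutoff-and-double-limit scheme and phrase the distinctness claim in terms of the coordinate symplectic areas of the limit sets rather than of parametrizations.
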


\begin{proof}
Suppose on the contrary that $\psi:\mathbb{D}_{\mathbb{R}}^2\times \mathbb{D}^{n-2}\rightarrow \mathbb{D}^{n}$ is a symplectomorphism. By composing a symplectomorphism of $\mathbb{D}^n$, we can assume that $\psi(0) = 0$.

Consider the standard almost complex structure $J_{\mbox{st}}$ on $\mathbb{D}_{\mathbb{R}}^2\times \mathbb{D}^{n-2}$ and let $J = \psi_*J_{\mbox{st}}$ be the complex structure on $\mathbb{D}^{n}$ given by the push-forward of $J_{\mbox{st}}$ by $\psi$. Since $\psi
^*\omega_{\mbox{st}} =\omega_{\mbox{st}}$, the almost complex structure $J$ is tamed by $\omega_{\mbox{st}}$. Then the complex matrix $\widetilde{A}$ of $J$ satisfies $\|\widetilde{A}(z)\|<1$ for $z\in\mathbb{D}^{n}$.

Let $\{K_l\}_{l=1}^{\infty}$ be a compact exhaustion of $\mathbb{D}^n$ so that each $K_l$ is a closed polydisc with radius less than 1, that is, $K_l\subset K_{l+1}$, $K_l$ is a compact subset of $\mathbb{D}^n$ for all $l$ and $\cup_{l=1}^{\infty}K_l = \mathbb{D}^n$. For each $l$, let $\chi_l$ be a smooth cut-off function on $\mathbb{C}^{n}$ with support in $\mathbb{D}^n$ and equal to 1 on $K_l$. Define $A_l = \chi_l\widetilde{A}$ to be a $n\times n$ matrix function on $\mathbb{C}^n$ such that $A_l=0$ outside $\mathbb{D}^n$. Since $\|\widetilde{A}\| < 1$ on $\mathbb{D}^n$, there is a constant $0<a<1$ such that (\ref{condition:complex_matrix}) holds for $A_l$. Let $J_l$ be the almost complex structure on $\mathbb{C}^{n}$ corresponding to the complex matrix $A_l$.

By considering $\mathbb{D}^n$ as a subset of $\mathbb{D}\times\mathbb{C}^{n-1}$, we can apply Theorem \ref{thm:AT_exist_Jdisc} so that for each $l$, there exists a $J_l$-holomorphic disc $f_l:\mathbb{D}\rightarrow\mathbb{D}\times\mathbb{C}^{n-1}$ such that the image of $f_l$ passes through the origin. Also if we write $f_l = (f_{l,1},\dots,f_{l,n})$, then we have Area$(f_{l,j}) = \delta_{j1}\pi$ for all $l$, here $\delta_{j1}$ is the Kronecker delta.

Fix an integer $N$, for each $l\geq N$, $\psi^{-1}(f_l(\mathbb{D})\cap K_N)$ is an analytic set in $\psi^{-1}(K_N)\subset\mathbb{D}_{\mathbb{R}}^2\times \mathbb{D}^{n-2}$ passing through the origin. Since $\psi$ is a symplectomorphism, we have
$$
\mbox{Area}(\psi^{-1}(f_l(\mathbb{D})\cap K_N)) \leq \mbox{Area}(f_l(\mathbb{D})\cap K_N)\leq \pi.
$$
Therefore by Theorem \ref{thm:bishop}, after passing to a subsequence,
$$
F_N = \lim_{l\rightarrow\infty}\psi^{-1}(f_l(\mathbb{D})\cap K_N)
$$
exists and Area$(F_N)\leq \pi$. Notice that $F_N$ is not an empty set for $N$ sufficiently large, this is because $0\in\psi^{-1}(f_l(\mathbb{D})\cap K_N)$ for all $l\geq N$.

% so we know that $0\in F_N$ and $F_N$ is not an empty set.

%Indeed, choose $b$ large so that the minimal area of analytic set in $\psi^{-1}(K_b)$ passing through $0$ equals $B\neq 0$, then for all $N>b$, Area$F_N\geq B>0$.

The above argument holds for all $N$, so we can apply Theorem \ref{thm:bishop} again to the sequence of analytic set $F_N$ as $N\rightarrow\infty$. After passing to a subsequence, denote the limit of $F_N$ by $F$. Now $F$ is an analytic set in $\mathbb{D}_{\mathbb{R}}^2\times \mathbb{D}^{n-2}$ passing through the origin with Area$(F)\leq \pi$ and $\partial F\subset \partial(\mathbb{D}_{\mathbb{R}}^2\times \mathbb{D}^{n-2})$. Since the minimal area of analytic set in $\mathbb{D}_{\mathbb{R}}^2\times \mathbb{D}^{n-2}$ through the origin is $\pi$, so we must have Area$(F)=\pi$. Therefore $F$ is one of the holomorphic discs $M_j$ for $3\leq j\leq n$ by Lemma \ref{lemma:limit_analytic_set}.

Let $E=\psi(F)$. We now know that Area$f_l = \pi$ for all $l$ and $f_l(\mathbb{D})\cap \mathbb{D}^n\rightarrow E$ as $l\rightarrow\infty$, also we have Area$(E)=\pi$. We want to show that $f_l(\mathbb{D})\rightarrow E$ as $l\rightarrow\infty$. Let $X_l = f_l(\mathbb{D})\setminus\mathbb{D}^n$, that is the image of $f_l$ which is not in $\mathbb{D}^n$. By the construction of $A_l$ and $J_l$, we know that $J_l = J_{\mbox{st}}$ outside $\mathbb{D}^n$, hence $X_l$ is an usual analytic set in $(\mathbb{D}\times \mathbb{C}^{n-1})\setminus\mathbb{D}^n$. Since Area$X_l\leq$ Area$f_l=\pi$ for all $l$, we can apply Theorem \ref{thm:bishop} to conclude that, after passing to a subsequence, $X_l$ converges to an analytic set $X$. However $f_l(\mathbb{D})\cap \mathbb{D}^n\rightarrow E$ as $l\rightarrow\infty$ and Area$(E)=\pi$ implies that
$$
\lim_{l\rightarrow\infty}\mbox{Area}(f_l(\mathbb{D})\cap \mathbb{D}^n) = \pi,
$$
and by construction Area$(f_l)=\pi$ for all $l$, hence we have Area$(X_l)\rightarrow 0 $ as $l\rightarrow\infty$. Therefore $X$ is an empty set and we can conclude that
$$
\lim_{l\rightarrow\infty}f_l(\mathbb{D})\subset\mathbb{D}^n,
$$
and hence
$$
\lim_{l\rightarrow\infty}f_l(\mathbb{D})=E.
$$
Since Area$(f_{l,j})=\delta_{j1}\pi$, if we write $\omega_{\mbox{st}} = \omega_1 +\cdots +\omega_n$ where $\omega_j = dx_j\wedge dy_j$ for $j = 1,\dots, n$, then we have
$$
\int_E\omega_j = \delta_{j1}\pi.
$$

Now for $1\leq k\leq n$, by considering $\mathbb{D}^n$ as a subset of the cylinder $\mathbb{C}^{k-1}\times\mathbb{D}\times\mathbb{C}^{n-k}\cong\mathbb{D}\times\mathbb{C}^{n-1}$, we can apply the above argument to obtain a real 2-dimensional set $E_k$ in $\mathbb{D}^n$ passing through the origin, satisfying the following conditions:
\begin{enumerate}
\item $\int_{E_k}\omega_j = \delta_{jk}\pi$ for $j = 1,\dots, n$, hence all $E_k$ are distinct for $1\leq k\leq n$.
\item The preimage $F_k=\psi^{-1}(E_k)$ is an analytic set in $\mathbb{D}_{\mathbb{R}}^2\times \mathbb{D}^{n-2}$ passing through the origin.
\item $F_k$ are distinct analytic sets for $1\leq k\leq n$ since $E_k$'s are distinct and $\psi$ is a bijection.
\item Area$(F_k)= \pi$ for $1\leq k\leq n$.
\end{enumerate}
Hence for each $1\leq k\leq n$, $F_k$ must be one of the holomorphic discs $M_j$ for $3\leq j\leq n$ according to Lemma \ref{lemma:limit_analytic_set}, but this is impossible since all $F_k$'s are distinct, so we arrived at a contradiction. Therefore $\mathbb{D}_{\mathbb{R}}^2\times \mathbb{D}^{n-2}$ and $\mathbb{D}^{n}$ equipped with the standard symplectic form on $\mathbb{C}^n$ are not symplectomorphic.

\end{proof}

\noindent\textbf{Acknowledgments.} The author is grateful to A. Tumanov for formulating this problem and providing useful suggestions.
%\subsection{}
%
%%    Ordinary theorem and proof
%\begin{theorem}[Optional addition to theorem head]
%% text of theorem
%\end{theorem}
%
%\begin{proof}[Optional replacement proof heading]
%% text of proof
%\end{proof}

%%    Figure insertion; default placement is top; if the figure occupies
%%    more than 75% of a page, the [p] option should be specified.
%\begin{figure}
%\includegraphics{filename}
%\caption{text of caption}
%\label{}
%\end{figure}

%    Mathematical displays; for additional information, see the amsmath
%    user's guide, linked from http://www.ams.org/tex/amslatex.html .

%% Numbered equation
%\begin{equation}
%\end{equation}
%
%% Unnumbered equation
%\begin{equation*}
%\end{equation*}
%
%% Aligned equations
%\begin{align}
%  &  \\
%  &
%\end{align}

%-----------------------------------------------------------------------
% End of proc-l-template.tex
%-----------------------------------------------------------------------
 
\end{document}